\begin{document}

\title{The leapfrog algorithm as nonlinear Gauss--Seidel}

\author{Marco Sutti\,\orcidlink{0000-0002-8410-1372}\thanks{Mathematics Division, National Center for Theoretical Sciences, Taipei, Taiwan (\email{msutti@ncts.tw}).}
~and Bart Vandereycken\thanks{Section of Mathematics, University of Geneva, Geneva, Switzerland (\email{bart.vandereycken@unige.ch}).}}

\date{}

\maketitle

\begin{abstract}
Several applications in optimization, image, and signal processing deal with data that belong to the Stiefel manifold $ \Stnp $, that is, the set of $n\times p$ matrices with orthonormal columns. Some applications, like the Riemannian center of mass, require evaluating the Riemannian distance between two arbitrary points on $ \Stnp $. This can be done by explicitly constructing the geodesic connecting these two points.
An existing method for finding geodesics is the leapfrog algorithm of J. L. Noakes. This algorithm is related to the Gauss--Seidel method, a classical iterative method for solving a linear system of equations that can be extended to nonlinear systems. We propose a convergence proof of leapfrog as a nonlinear Gauss--Seidel method. Our discussion is limited to the case of the Stiefel manifold, however, it may be generalized to other embedded submanifolds.
We discuss other aspects of leapfrog and present some numerical experiments.

\bigskip
\textbf{Key words.} Riemannian manifolds, geodesics, Stiefel manifold, nonlinear Gauss--Seidel

\medskip
\textbf{AMS subject classifications.} 65L10, 49Q12, 65K10

\end{abstract}

\section{Introduction} \label{sec:geodesic_exp_log}
The object of study in this paper is the compact Stiefel manifold, i.e., the set of orthonormal $n$-by-$p$ matrices
\[
   \Stnp = \lbrace X \in \R^{n \times p}: \ X\tr\! X = I_p \rbrace.
\]
Here, we are concerned with computing the Riemannian distance between two points on the Stiefel manifold. 
The distance between two points on a manifold is related to the concept of minimizing geodesic.
A geodesic $ \gamma \colon [0,t] \to \cM $ is a curve with zero acceleration, which generalizes the notion of straight lines in Euclidean space to a Riemannian manifold~\cite{AMS:2008}.
Geodesics allow us to introduce the \emph{Riemannian exponential} $ \mathrm{Exp}_{x}\colon T_{x}\cM \to \cM $ that maps a tangent vector $\xi = \dot{\gamma}(0) \in T_{x}\cM $ to the geodesic endpoint $\gamma(1) = y $ such that $ \mathrm{Exp}_{x}(\xi) = y$. 
The Riemannian exponential is a local diffeomorphism, i.e., it is locally invertible and its inverse is called the \emph{Riemannian logarithm} of $y$ at $x$ satisfying $ \mathrm{Log}_{x}(y) = \xi $.
The \emph{injectivity radius} at a point $x$ of a Riemannian manifold $\cM$ is the largest radius for which the exponential map $\mathrm{Exp}_{x}$ is a diffeomorphism from the tangent space to the manifold. The global injectivity radius of a manifold is the infimum of all the injectivity radii over all points of the manifold.
Given two points $ x $ and $ y $ on a manifold $ \cM $, if the Riemannian distance $ d(x,y) $ is smaller than $ \mathrm{inj}(\cM) $, then there exists a unique length-minimizing geodesic from $ x $ to $ y $.
For the Stiefel manifold, the injectivity radius is at least $ 0.89\,\pi $; see \protect{\cite[Eq.~(5.13)]{Rentmeesters:2013}}.

The distance on the Stiefel manifold is involved in numerous fields of applications, among which, computer vision \protect{\cite{Cetingul:2009,Sundaramoorthi:2011,Turaga:2011,Yin:2015}}, statistics \protect{\cite{Srivastava:2016}}, reduced-order models \protect{\cite{Amsallem:2011}}.
Given two points on the Stiefel manifold, our goal is to compute the length of the minimizing geodesic connecting them. For some manifolds, there are explicit formulas available for computing the distance.
For the Stiefel manifold, there is no closed-form solution known. In general, the problem of finding the distance given two points on a Riemannian manifold is related to the Riemannian logarithm. The problem of computing the Riemannian logarithm on the Stiefel manifold has already been tackled by several authors, who proposed some numerical algorithms. Rentmeesters \protect{\cite{Rentmeesters:2013}} and Zimmermann \protect{\cite{Zimmermann:2017,Zimmermann:2019}} proposed a similar algorithm which is only locally convergent and depends upon the definition of the (standard) matrix logarithm function. Bryner \protect{\cite{Bryner:2017}} and Sutti \protect{\cite{Sutti:2020}} considered standard shooting methods for boundary value problems. Zimmermann and H\"uper \protect{\cite{Zimmermann:2022}} recently reviewed the available methods for computing the Riemannian distance on the Stiefel manifold.

Another method for finding geodesics is the leapfrog algorithm introduced by Noakes \cite{Noakes:1998}; see also~\cite{Noakes:2022}. This method has global convergence properties, but it slows down when the solution is approached \protect{\cite[p.~2796]{Kaya:2008}}. 
Moreover, Noakes realized that his leapfrog algorithm was related to the Gauss--Seidel method \protect{\cite[p.~39]{Noakes:1998}}.
The link between leapfrog and nonlinear Gauss--Seidel was not further investigated, since there is no trace of this idea being developed in the other related papers \cite{Kaya:1997,Kaya:2008}.
In this paper, we will prove convergence of leapfrog as a nonlinear block Gauss--Seidel method. Even though our focus will be on $\Stnp$, most of our discussion may be generalized to other embedded submanifolds. 

A Riemannian metric has to be specified in order to turn $\Stnp$ into a Riemannian manifold, and in general different choices are possible.
In this paper, we consider the non-Euclidean \emph{canonical metric} inherited by $\Stnp$ from its definition as a quotient space of the orthogonal group \protect{\cite[Eq.~(2.39)]{Edelman:1998}}. Given $ Y \in \Stnp $ and $ \xi \in T_{Y}\Stnp $, the canonical metric reads
\begin{equation}\label{eq:formula_canonical_metric}
    g_{c}(\xi,\xi) = \trace\!\big( \xi\tr ( I - \tfrac{1}{2}YY\tr) \, \xi \big).
\end{equation}
Tangent vectors to the Stiefel manifold may be expressed in the form
\[
   \xi = Y_{0}\Omega + Y_{0\perp}K, \quad \text{with} \quad \Omega \in \mathcal{S}_{\mathrm{skew}}(p), \quad K\in\R^{(n-p)\times p},
\]
with $ \mathcal{S}_{\mathrm{skew}}(p) $ the vector space of $p$-by-$p$ skew-symmetric matrices.

An explicit formula for a geodesic with initial acceleration the tangent vector $ \xi $ and base point $ Y_{0} $ is \protect{\cite[Eq.~(2.42)]{Edelman:1998}}
\begin{equation}\label{eq:closed-form-sol-geodesic}
    Y(t) = Q \exp\!\left( \begin{bmatrix}
        \Omega   &   -K\tr \\
        K        &    O_{n-p}
    \end{bmatrix} t \right)
    \begin{bmatrix}
        I_{p} \\
        O_{(n-p)\times p}
    \end{bmatrix},
\end{equation}
with $ Q = \big[ Y_{0} \ Y_{0\perp} \big] $ and $Y_{0\perp}$ being any matrix whose range is $ (\mathrm{span}(Y_{0}))^{\perp}$.

Given two points $Y_0$, $Y_1$ on $\Stnp$ that are sufficiently close to each other, finding the distance between them is equivalent to finding the tangent vector $\xi^{\ast} \in T_{Y_0}\Stnp$ with the shortest possible length such that $ \Exp_{Y_0}(\xi^{\ast}) = Y_1 $ \protect{\cite{Lee:2018,boumal2020intromanifolds}}. 
The solution to this problem is equivalent to the Riemannian logarithm of $Y_{1}$ with base point $Y_{0}$, i.e., $ \xi^{\ast} = \mathrm{Log}_{Y_0}(Y_1) $. Given the endpoints $ Y_0 $ and $ Y_1 $, we do not know what the matrices $ \Omega $ and $ K $ in \eqref{eq:closed-form-sol-geodesic} are. So the problem becomes: Find the matrices $ \Omega $ and $ K $ such that the explicit formula \eqref{eq:closed-form-sol-geodesic} gives the endpoint $ Y_{1} $.

\section{Leapfrog algorithm}
When $X, Y \in \cM$ are sufficiently close, their connecting geodesic can be found by applying Newton's method to~\eqref{eq:closed-form-sol-geodesic} such that $Y(1)=Y$ with $Y(0) = X$. This is more generally known as single shooting\footnote{In this context, there is no need to solve an ordinary differential equation as in a normal shooting method, because we have the solution \eqref{eq:closed-form-sol-geodesic}. Hence, it is actually Newton's method, but we keep the shooting terminology because it is typical for boundary value problems.}. However, when $X$ and $Y$ are far apart, it is well known that single shooting will have difficulty finding the connecting geodesic. The main idea behind the leapfrog algorithm of Noakes~\cite{Noakes:1998} is to exploit the success of single shooting by subdividing the global problem into several local problems, where intermediate points $X_i \in \cM$ are introduced between $X$ and $Y$, for which the endpoint geodesic problem can be solved again by single shooting.
The algorithm then iteratively updates a piecewise geodesic to obtain a globally smooth geodesic between $X$ and $Y$. This idea is actually not new and goes back as early as 1963 by Milnor~\cite[III.\S16]{Milnor:1963}. It also resembles the better-known multiple shooting method for boundary value problems but it is different. The single and multiple shooting methods for the endpoint geodesic problem on the Stiefel manifold are discussed in \protect{\cite{Bryner:2017,Sutti:2020}}.

\subsection{Formal description of the algorithm}
In this section, we describe the leapfrog algorithm by following the presentation in~\cite{Noakes:1998}. Let $\cM$ be a $C^{\infty}$ path-connected Riemannian manifold. Consider a \emph{piecewise} (or \emph{broken}) \emph{geodesic} $ \omega_{X} $ joining $ X_{0} $ to $ X_{m-1} $, having $ m-1 $ geodesic segments. Assuming $ X_{i} $ and $ X_{i+1} $ are sufficiently close to each other, $ \omega_{X} $ is uniquely identified by the $ m $-tuple $ X = ( X_{0}, X_{1}, \ldots, X_{m-1} ) \in \cM^{m} $, where $ X_{i} $ are the junctions of the geodesic segments. The leapfrog algorithm now proceeds as follows: for $ i=1,\ldots, m-2 $, each $ X_i $ is mapped onto the minimizing geodesic joining $ X_{i-1} $ and $ X_{i+1} $. This achieves the largest possible decrease in length while keeping other variables fixed. Though there are several choices to do this, leapfrog maps $ X_i $ onto the midpoint of the geodesic joining $ X_{i-1} $ and $ X_{i+1} $. By iterating this procedure, the algorithm generates a sequence $\Omega = \left\lbrace \omega_{X^{(k)}}\colon [0,1] \to \cM \colon k = 0,1, \ldots \right\rbrace $ of broken geodesics whose lengths are decreasing. Figure \ref{fig:leapfrog} illustrates one iteration of the leapfrog algorithm. It is clear that leapfrog generates a sequence of broken geodesics $\omega_{X^{(k)}}$ that are defined from $ X^{(k)}$. In addition, the length of $\omega_{X^{(k)}}$ is non-increasing in $k$ since at each step two neighboring geodesics get replaced by one global geodesic connecting their endpoints.

\begin{figure}
\centering
\includegraphics[width=0.85\columnwidth]{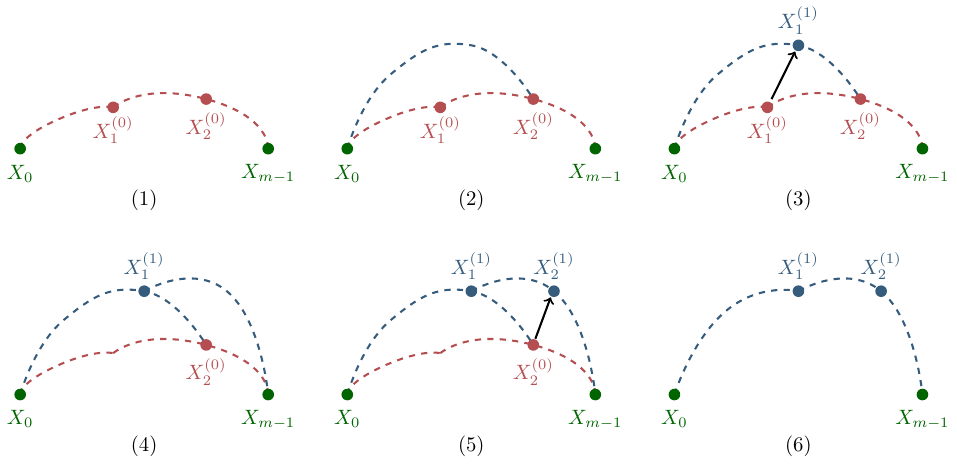}
\caption{Illustration of one full iteration of the leapfrog scheme for some non-Euclidean metric (the lengths for the Euclidean metric clearly increase during iteration).}
\label{fig:leapfrog}
\end{figure}

\subsection{Known results}
Let $ \cY $ be the set of all tuples $X=(X_{0},X_{1},\ldots,X_{m-1}) \in \cM^{m} $ satisfying $d(X_{i-1},X_{i})\leq \delta$ for all $i=1,2,\ldots,m-2$. In \cite[\S2]{Noakes:1998}, $\delta$ is related to the notion of Lebesgue number of an open cover. Here, we can assume that $ \delta $ is equal to $ \tfrac{1}{2} \inj\!\big(\cM\big) $, where $ \inj $ is the injectivity radius (see Section~\ref{sec:geodesic_exp_log}). Let $ \cF \colon \cY \to \cY $ represent one full leapfrog iteration and let $X^{\ast}$ be the limit of any convergent subsequence of $ S = \lbrace \cF^{k}(X^{(0)}) \colon k \geq 1 \rbrace $ with $X^{(0)} \in \cY$. By compactness, \cite{Noakes:1998} shows that at least one convergent subsequence of $S$ exists and that the limit of this subsequence are points that lie on a global geodesic connecting the endpoints $ X_0 $ and $ X_{m-1} $.
The following result is stated in \cite[Theorem~5.2]{Kaya:2008}.

\begin{theorem}\label{thm:kaya_noakes_convg_leapfrog}
    $S$ has a unique accumulation point.
\end{theorem}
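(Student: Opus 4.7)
The plan is to couple the monotone length decrease of leapfrog with the observation that, when restricted to tuples whose nodes lie on a common global geodesic, one full sweep of leapfrog is literally the linear Gauss--Seidel iteration for the one-dimensional discrete Laplacian. Contraction of that linear iteration then collapses the whole accumulation set to a single point by a standard maximum-point argument.

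Each inner leapfrog step $X_i \mapsto \mathrm{mid}(X_{i-1},X_{i+1})$ replaces segments of total length $d(X_{i-1},X_i)+d(X_i,X_{i+1})$ by segments of total length $d(X_{i-1},X_{i+1})$, so the triangle inequality gives $L^{(k+1)}\le L^{(k)}$, with equality precisely when $X_i$ already lies on the minimizing geodesic from $X_{i-1}$ to $X_{i+1}$. Hence $L^{(k)}\downarrow L^{\star}$ for some $L^{\star}\ge d(X_0,X_{m-1})$, and combined with the Noakes result already cited, every accumulation point $X^{\ast}$ has a \emph{smooth} broken geodesic: all its nodes lie on a single geodesic from $X_0$ to $X_{m-1}$. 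Since the constraint $d(X_i,X_{i+1})\le\delta=\tfrac12\inj(\cM)$ keeps consecutive nodes within the injectivity ball, this geodesic is locally unique; call it $\gamma$ and parametrize it by arc length on $[0,L^{\star}]$. Let $\mathcal{G}\subset\cY$ be the set of tuples whose nodes lie on $\gamma$. In the arc-length coordinates $s=(s_1,\ldots,s_{m-2})$, the map $\cF|_{\mathcal{G}}$ is exactly the affine Gauss--Seidel iteration
\[
  s_i^{\mathrm{new}} \;=\; \tfrac{1}{2}\bigl(s_{i-1}^{\mathrm{new}} + s_{i+1}^{\mathrm{old}}\bigr), \qquad i=1,\ldots,m-2, \qquad s_0=0,\ s_{m-1}=L^{\star},
\]
for the tridiagonal symmetric positive-definite 1-D Laplacian. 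Its unique solution is the equi-partition $s_i^{\star}=iL^{\star}/(m-1)$, corresponding to a unique tuple $X^{\star}\in\mathcal{G}$; by Young's theorem the iteration matrix has spectral radius $\cos^2(\pi/(m-1))<1$, so there exists a norm $\|\cdot\|_{\ast}$ in which $\cF|_{\mathcal{G}}$ is a strict contraction about $X^{\star}$ with some factor $\rho<1$.

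The accumulation set $A$ of $S$ is nonempty and compact, contained in $\mathcal{G}$ by the previous step, and satisfies the $\omega$-limit property $\cF(A)=A$: every $y\in A$ equals $\cF(z)$ for some $z\in A$, obtained by extracting a convergent subsequence from the predecessors $X^{(k_j-1)}$. Setting $M=\max_{X\in A}\|X-X^{\star}\|_{\ast}$, attained at some $Y\in A$ with $Y=\cF(Z)$ and $Z\in A$, the contraction gives $M=\|\cF(Z)-\cF(X^{\star})\|_{\ast}\le\rho\|Z-X^{\star}\|_{\ast}\le\rho M$, which since $\rho<1$ forces $M=0$ and hence $A=\{X^{\star}\}$. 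The delicate point to verify is that all accumulation points sit on the \emph{same} geodesic $\gamma$: Noakes' compactness argument a priori yields one geodesic per accumulation point. I would close this gap by using that within the ball of radius $\tfrac12\inj(\cM)$ each interior node together with its two neighbours determines the germ of the geodesic through it, so constancy of $L^{\star}$ on $A$ combined with continuity of $\cF$ prevents the orbit from asymptotically straddling two distinct global geodesics. Once that is in place, identifying $\cF|_{\mathcal{G}}$ with the midpoint rule in arc-length coordinates is routine (it is the defining property of a constant-speed geodesic), and the maximum-point contraction argument finishes the proof.
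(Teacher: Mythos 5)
Your reduction of leapfrog on a \emph{fixed} geodesic to linear Gauss--Seidel for the one-dimensional Dirichlet Laplacian is correct and attractive (and consonant with the structure of the matrix $A$ in Section~\ref{sec:local_convergence}, whose smallest eigenvalue $2-2\cos\frac{\pi}{m-1}$ reflects the same Laplacian), and the $\omega$-limit identity $\cF(A)=A$ together with the maximum-point contraction argument is fine \emph{once} one knows that the whole accumulation set $A$ lies in the single set $\mathcal{G}$ of tuples supported on one common geodesic $\gamma$. But that is exactly the point you leave open, and the closing argument you sketch does not work. Noakes' compactness result gives, for each accumulation point separately, some global geodesic through its nodes; when $d(X_0,X_{m-1})$ exceeds the injectivity radius (the only regime in which leapfrog is interesting) the endpoints may be joined by several, even a continuum of, geodesics of the same length. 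If $A$ were a connected family of equi-spaced tuples, each supported on a different geodesic of common length $L^{\star}$, then every element of $A$ is a fixed point of $\cF$, so $\cF(A)=A$ holds trivially, the length is constant on $A$, and $\cF$ is continuous --- none of the properties you invoke (``germ of the geodesic determined by a node and its neighbours'', constancy of $L^{\star}$, continuity of $\cF$) is violated. Your contraction is only a contraction \emph{along} each geodesic, in the arc-length coordinates of that geodesic; it says nothing about drift transversal to a family of geodesics, so the inequality $M\le\rho M$ cannot even be written down without first knowing $A\subset\mathcal{G}$ for a single $\gamma$. Ruling out this scenario (typically via $d(X^{(k+1)},X^{(k)})\to 0$, connectedness of $A$, and a genuinely local uniqueness/stability argument at the fixed-point set) is the actual content of the theorem, and it is missing.

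Two smaller points. First, the paper itself does not prove this statement: it is quoted from Kaya--Noakes \cite[Theorem~5.2]{Kaya:2008}, so a complete argument along your lines would be an independent proof, not a reconstruction of one in the paper. Second, even granting a single $\gamma$, you should check that the midpoints are taken along arcs of $\gamma$: with $\delta=\tfrac12\inj(\cM)$ one only gets $d(X_{i-1},X_{i+1})\le\inj(\cM)$, and at the boundary value uniqueness of the minimizing geodesic (hence the identification of $\cF|_{\mathcal{G}}$ with the affine Gauss--Seidel sweep) can fail; a strict inequality, as in the paper's use of $\delta_g=0.89\,\pi$, is needed. The spectral radius $\cos^2\frac{\pi}{m-1}$ and the existence of a norm making the affine sweep a strict contraction are fine.
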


The theorem guarantees convergence of the iterates $X^{(k)} = \cF(X^{(k-1)})$ with $X^{(0)} \in \cY$. From \cite[Lemma~3.2]{Noakes:1998} we also know that leapfrog will converge to a uniformly distributed $ m $-tuple $ X^{\ast} = ( X_{0}, X_{1}^{\ast} \ldots, X_{m-2}^{\ast}, X_{m-1} ) $, i.e., $ d(X_{i}^{\ast},X_{i+1}^{\ast}) $ are all equal, for $ i = 0, \ldots, m-2 $. In other words, at convergence, the geodesic segments connecting the junction points will all have the same length. 

An apparent drawback in the current theory is that it lacks a classical convergence proof as a fixed point iteration method, although leapfrog can be easily recognized as such. In the next section, we will provide the details of how to analyze leapfrog as a nonlinear block Gauss--Seidel method.

\section{Convergence of leapfrog as nonlinear Gauss--Seidel}\label{sec:lf_as_ngs}
Let $\cM = \Stnp$ with the Riemannian distance function $d$. The starting point is to realize that leapfrog solves the optimization problem
\[
    \min_{ X_{1}, \ldots, X_{m-2} \in \Stnp } F(X_{1}, \ldots, X_{m-2} ) \quad \text{with} \quad 
    F(X_{1}, \ldots, X_{m-2}) = \sum_{i=1}^{m-1} d^{2}(X_{i-1},X_{i}),
\]
by cyclically minimizing over each variable $X_{i}$ for $i=1,2,\ldots, m-2 $. Specifically, at the $k$th iteration, leapfrog updates $X_i^{(k-1)}$ by the minimizer of the problem
\begin{equation}\label{eq:constrained_opt_pb}
\begin{aligned}
   &\min_{X_{i} \in \Stnp } F( X_{1}^{(k)}, \ldots, X_{i-1}^{(k)}, X_{i}, X_{i+1}^{(k-1)}, \ldots, X_{m-2}^{(k-1)} ) \\ 
   &= \min_{X_{i} \in \Stnp } d^2(X_{i-1}^{(k)},X_{i}) + d^2(X_{i},X_{i+1}^{(k-1)}) + \text{constant}.
\end{aligned}
\end{equation}
Since $d$ is the Riemannian distance, this problem coincides with the definition of the Riemannian center of mass\footnote{The Riemannian center of mass was constructed in \protect{\cite{Grove:1973}}. As H. Karcher points out in \protect{\cite{Karcher:2014}}, ``Probably in 1990 someone renamed it without justification into \emph{karcher mean} and references to the older papers were omitted by those using the new name. (...) I think it is fair to say that a substantial amount of damage was caused by the renaming''. For this reason, in this paper, we decided to stick to the original name.}
between the two points $X_{i-1}^{(k)}$ and $X_{i+1}^{(k-1)}$; see~\cite[Eq.~(1.1)]{Karcher:1977}. For the compact Stiefel manifold, a Riemannian center of mass always exists, but it does not need to be unique \cite[p.~37]{Rentmeesters:2013}. However, a sufficient condition for uniqueness is $d(X_{i-1}^{(k)},X_{i+1}^{(k-1)}) < \inj\!\big(\Stnp\big)$, where $ \inj $ is the injectivity radius (see Section~\ref{sec:geodesic_exp_log}). This is true if all $X_i$ are close enough (we will make this more precise later). In that case, the unique solution that solves~\eqref{eq:constrained_opt_pb} is the midpoint of the minimizing geodesic between $X_{i-1}^{(k)}$ and $X_{i+1}^{(k-1)}$.
Leapfrog now proceeds to update the $X_i$ in a Gauss--Seidel fashion where the most recent $X_{i-1}^{(k)}$ is used to update $X_{i}^{(k-1)}$. This kind of optimization scheme is known as \emph{block coordinate descent method} of Gauss--Seidel type~\cite{Ortega:2000}.

\subsection{Nonlinear block Gauss--Seidel method}
Let us first consider the case of Gauss--Seidel in $ \R^{n} $.
Let the variable $ x \in \R^{n} $  be partitioned as $ x = (x_{1}, x_{2}, \ldots, x_{m}) $, where $ x_{i} \in \R^{q_{i}} $ and $ \sum_{i} q_{i} = n $, and group correspondingly the components of $  \widetilde{F} \colon D \subset \R^{n} \to \R^{n} $ into mappings $ \widetilde{F}_{i} \colon \R^{n} \to \R^{q_{i}} $, $ i = 1, \ldots, m $. 
The minimizers of the function $ \widetilde{F}(x) $ satisfy the first-order optimality condition $ \nabla \widetilde{F}(x) = 0 $. Let us define $ \cG_{i} = \nabla \widetilde{F}_{i} $, $ i = 1, \ldots, m $.
If we interpret the linear Gauss--Seidel iteration in terms of obtaining $ x_{i}^{(k)} $ as the solution of the $i$th equation of the system with the other $ m-1 $ block variables held fixed, then we may immediately consider the same prescription for nonlinear equations \cite[p.~219]{Ortega:2000}. Then solving
\begin{equation}\label{eq:ith_nonlinear_eq}
   \cG_{i}( x_{1}^{(k)},\ \ldots,\ x_{i-1}^{(k)}, \ y,\ x_{i+1}^{(k-1)}, \ \ldots,\  x_{m}^{(k-1)}) = 0
\end{equation}
for $ y $ and defining $ x_{i}^{(k)} = y $ describes a nonlinear block Gauss--Seidel process in which a complete iteration requires the solution of $ m $ nonlinear systems of dimensions $ q_{i} $, $ i = 1, \ldots, m $; see~\cite[p.~225]{Ortega:2000}.
The convergence theory in \protect{\cite{Ortega:2000}} applies only to functions whose domain of definition is Euclidean space $\R^{n}$. This theory cannot be applied to functions that are defined on manifolds. For instance, the Riemannian distance $ d $ is only defined on a subset of $\R^{n}$, i.e., the embedded submanifold. For this reason, in the next section, we will introduce a smooth extension of the Riemannian distance function that can also be evaluated for points that do not belong to the manifold.

\subsection{Extended objective function} \label{sec:ext_dist_function} 
As we have seen above, leapfrog solves in an alternating way the problem
\begin{equation*}
    \min_{X_1,\ldots, X_{m-2} \in \Stnp} F(X_{1}, \ldots, X_{m-2} ) = \sum_{i=1}^{m-1} d^2(X_{i-1},X_{i}),
\end{equation*}
where $X_0$ and $X_{m-1}$ are the fixed endpoints. This objective function $F$ is only defined on the manifold $\Stnp$.
In this section, we will identify an \emph{extended objective function} $\widetilde F$ that is defined on $\Rnp$ for which the standard nonlinear block Gauss--Seidel method produces the same iterates as the leapfrog algorithm. The key result of this section is stated in Prop.~\ref{prop:leapfrog_is_GS}. This will allow us to analyze the convergence of leapfrog using standard results for nonlinear Gauss--Seidel.

We claim the extended cost function can be chosen as
\begin{equation*}
    \min_{X_1,\ldots, X_{m-2} \in \Rnp} \widetilde F(X_{1}, \ldots, X_{m-2} ) = \sum_{i=1}^{m-1} \widetilde  d^2(X_{i-1},X_{i}),
\end{equation*}
with \emph{extended distance function}
\begin{equation}\label{eq:ext_dist_function}
\widetilde  d^2(\widetilde  X,   \widetilde  Y) = \begin{cases}
  d^{2}(\PSt \widetilde  X,\PSt \widetilde  Y)  + \| \widetilde X - \PSt \widetilde X \|_{\F}^{2} + \| \widetilde Y - \PSt \widetilde Y\|_{\F}^{2}, \\ \quad\quad\quad\qquad\qquad\qquad\qquad \text{if $\sigma_p(\widetilde X) > 0$ and $\sigma_p(\widetilde Y) > 0$;}\\
 +\infty, \qquad\qquad\qquad\quad\quad\quad \ \  \text{otherwise,}
 \end{cases}
\end{equation}
where $ \PSt $ denotes the orthogonal projector onto the Stiefel manifold, and $ \sigma_{p} $ is the smallest singular value.

The condition $\sigma_p(\widetilde  X)>0$ is equivalent to the existence of a unique best approximation of $\widetilde  X$ in $\Stnp$. In other words, $\PSt \widetilde  X$ is well defined. 
Concretely, we can define the projector $ \PSt \colon \Rnp \to \Stnp $ by $ \PSt(Z) = Z(Z\tr Z)^{-1/2} $, that is, the orthogonal factor of the polar decomposition of $ Z $ \cite[p.~58]{AMS:2008}. Figure \ref{fig:extended_distance} illustrates the extended distance function $ \widetilde  d^2(\widetilde  X,   \widetilde  Y)  $.

\begin{figure}
   \centering
   \includegraphics[width=0.55\columnwidth]{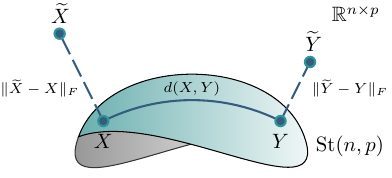}
   \caption{The extended distance function.} \label{fig:extended_distance}
\end{figure}

\subsection{Leapfrog as nonlinear Gauss--Seidel}
In order to show that nonlinear Gauss--Seidel applied to $\widetilde F$ is equivalent to leapfrog for $F$, we need a few lemmas. The first one is a known result that addresses the problem of how close the points on $\Stnp$ need to be so that their connecting geodesic is unique.

\begin{lemma}\label{lemma:geodesic_exists_for_close_neighbours}
 Let $X,Y \in \Stnp$ such that $d(X,Y)\leq \delta_g $, with $ \delta_g = 0.89 \, \pi $. Then there exists a unique minimizing geodesic between $X$ and $Y$. As a consequence, also the Riemannian center of mass between $X$ and $Y$ exists and is uniquely defined.
\end{lemma}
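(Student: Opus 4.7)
The plan is to handle the two conclusions of the lemma independently, as each reduces to a standard Riemannian-geometry fact that the paper has already set up.

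For the first part, existence and uniqueness of the minimizing geodesic, I would appeal directly to the definition of injectivity radius: since the excerpt already recalls (citing Rentmeesters) that $\mathrm{inj}(\Stnp) \geq 0.89\,\pi = \delta_g$, the Riemannian exponential $\mathrm{Exp}_X$ is a diffeomorphism from the open ball of radius $\delta_g$ in $T_X\Stnp$ onto its image. Given $X,Y$ with $d(X,Y) \leq \delta_g$, there is therefore a unique tangent vector $\xi = \mathrm{Log}_X(Y)$ of norm $\|\xi\| = d(X,Y)$ such that $\mathrm{Exp}_X(\xi) = Y$, and the curve $t \mapsto \mathrm{Exp}_X(t\,\xi)$ for $t \in [0,1]$ is the unique minimizing geodesic joining $X$ to $Y$.

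For the second part, I would first exhibit the natural candidate for the Riemannian center of mass, namely the midpoint $M = \mathrm{Exp}_X(\tfrac{1}{2}\xi)$ of the geodesic produced above. A direct variance computation shows $M$ is a global minimizer of the Karcher functional $Z \mapsto \tfrac{1}{2}d^2(Z,X) + \tfrac{1}{2}d^2(Z,Y)$, so existence is immediate. For uniqueness I would invoke the classical Karcher/Afsari uniqueness theorem for the center of mass, which guarantees a unique minimizer whenever all data points lie in a geodesic ball of radius smaller than $\min\!\big(\mathrm{inj}(\cM)/2, \pi/(2\sqrt{\Delta})\big)$, with $\Delta$ an upper bound on sectional curvature; since we only ever centre two points both at distance at most $\delta_g/2$ from $M$, this regularity condition is satisfied on $\Stnp$ under our hypothesis.

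The main obstacle I anticipate is matching the chosen radius $\delta_g = 0.89\,\pi$ to the curvature-dependent uniqueness radius on the Stiefel manifold. If the curvature bound does not comfortably accommodate $\delta_g$, a direct two-point argument can be used as a fallback: any other local minimizer $M'$ of the Karcher functional must satisfy the first-order condition $\mathrm{Log}_{M'}(X) + \mathrm{Log}_{M'}(Y) = 0$, which forces $X$ and $Y$ to lie on a common minimizing geodesic through $M'$; uniqueness of the geodesic from part one then identifies $M'$ with the midpoint $M$. Either route delivers uniqueness of the Riemannian center of mass, completing the lemma.
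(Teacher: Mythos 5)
The first half of your proposal is exactly the route the paper takes (the paper in fact states the lemma as a known result and offers no further proof): uniqueness of the minimizing geodesic is read off from Rentmeesters' estimate $\mathrm{inj}(\Stnp)\geq 0.89\,\pi=\delta_g$, so there is nothing to add there. The problem is the center-of-mass half. Your primary route via the Karcher/Afsari uniqueness theorem is not closed: the admissible radius there is $\tfrac12\min\!\big(\mathrm{inj}(\cM),\,\pi/\sqrt{\Delta}\big)$, and with $d(X,Y)\leq\delta_g$ the two data points only lie in a ball of radius $\delta_g/2\approx 0.445\,\pi$ about the midpoint $M$, which is not strictly below $\tfrac12\mathrm{inj}$ given only the lower bound $\mathrm{inj}\geq 0.89\,\pi$; moreover the curvature bound $\Delta$ of the canonical metric is precisely the quantity behind the $0.89\,\pi$ figure, so whether it "comfortably accommodates" $\delta_g$ is exactly the verification you have not carried out. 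You flagged this obstacle yourself, so the weight of the proof falls on the fallback.

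The fallback contains a genuine gap: the first-order condition $\mathrm{Log}_{M'}(X)+\mathrm{Log}_{M'}(Y)=0$ only produces \emph{some} geodesic through $M'$ joining $X$ and $Y$; it does not make that geodesic minimizing, so part one cannot be invoked to identify $M'$ with $M$. On the sphere, with $X,Y$ close together, the point antipodal to their midpoint on the great circle through them satisfies the same stationarity condition while lying on the non-minimizing long arc—criticality alone cannot distinguish such points, and ruling out spurious local minimizers is precisely what must be proved. (There is also the minor issue that $d^2(\cdot,X)$ need not be differentiable at cut points, so the first-order condition itself requires justification at an arbitrary minimizer.) The gap closes easily if you use minimality of the \emph{value} instead of stationarity, which is also the notion the paper needs later (the center of mass as global minimizer, Karcher's Eq.~(1.1)): a minimizer $M'$ exists by compactness of $\Stnp$, and $d^2(M',X)+d^2(M',Y)\leq \tfrac12 d^2(X,Y)$ since the midpoint achieves this value; combining $(a+b)^2\leq 2(a^2+b^2)$ with the triangle inequality $d(X,Y)\leq d(M',X)+d(M',Y)$ forces $d(M',X)=d(M',Y)=\tfrac12 d(X,Y)$, so the concatenation of minimizing geodesics $X\to M'\to Y$ has length $d(X,Y)$ and is therefore the unique minimizing geodesic from part one, whence $M'=M$. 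This elementary argument needs no curvature constants at all and proves exactly what Lemma~\ref{lemma:substep_leapfrog_equals_substep_GS} later uses.
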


\begin{remark}\label{rmk:distance} We can compare the Riemannian and Euclidean distances between $X$ and $ Y \in \Stnp$ asymptotically in the following way\footnote{For the Riemannian distance $d_\mathrm{e}$ based on the embedded metric, it is easy to see that $ \| X - Y \|_{\F} \leq d_\mathrm{e}(X,Y) $ since the Euclidean length of a geodesic on $\Stnp$ is always larger than that of a straight line.}. 
From the expansion of the canonical distance in~\eqref{eq:expansion_canonical_dist}, it is clear that
\[
   d(X,Y) \leq \| X - Y \|_{\F} + \cO (\| X - Y \|^{2}_{\F}) \quad \text{for} \quad \| X - Y \|_{\F} \to 0.
\]
By neglecting $ \cO (\| X - Y \|^{2}_{\F}) $, we thus have $ d(X,Y) \lesssim \| X - Y \|_{\F} $. In particular, $\|X - Y\|_{\F} \leq \delta_g $ implies $ d(X,Y) \lesssim \delta_g$.
\end{remark}

Let $X_{i-1}, X_{i+1} \in \Stnp$. Denote 
\[ 
F_i(Y) = d^2(X_{i-1},Y) + d^2(Y,X_{i+1}), \qquad \widetilde F_i(\widetilde Y) = \widetilde d^2(X_{i-1}, \widetilde Y) + \widetilde d^2(\widetilde Y,X_{i+1}),
\]
where $X_{i-1}$, $ X_{i+1}$ are constant and hidden in the notation.

\begin{lemma}\label{lemma:substep_leapfrog_equals_substep_GS}
With the notation from above assume that $d(X_{i-1},X_{i+1})\leq \delta_g$, then the $i$th substep of leapfrog produces the same solution $Y^{\ast}$ as the minimization of $\widetilde{F}_{i}$
\[
\argmin_{Y \in \Stnp} F_i(Y) = \argmin_{\widetilde Y \in \Rnp} \widetilde F_i(\widetilde Y) = Y^{\ast},
\]
with $Y^{\ast}$ the Riemannian center of mass on $\Stnp$ of $X_{i-1}$ and $X_{i+1}$.
\end{lemma}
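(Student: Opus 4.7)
The plan is to prove the two equalities separately. For the first equality, $\argmin_{Y \in \Stnp} F_i(Y) = Y^{\ast}$, I will combine the distance hypothesis $d(X_{i-1}, X_{i+1}) \leq \delta_g$ with Lemma~\ref{lemma:geodesic_exists_for_close_neighbours} to obtain a unique minimizing geodesic between $X_{i-1}$ and $X_{i+1}$. A routine triangle-inequality/AM--QM estimate
\[
F_i(Y) \geq \tfrac{1}{2}\bigl(d(X_{i-1},Y) + d(Y,X_{i+1})\bigr)^{2} \geq \tfrac{1}{2}\, d^{2}(X_{i-1},X_{i+1}),
\]
with equality if and only if $d(X_{i-1},Y) = d(Y,X_{i+1})$ and $Y$ lies on a minimizing geodesic from $X_{i-1}$ to $X_{i+1}$, identifies the midpoint of this geodesic as the unique global minimizer, which is the Riemannian center of mass $Y^{\ast}$. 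The definition of the leapfrog substep makes this precisely the point produced.

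For the second equality, the key observation is that the two fixed endpoints already satisfy $\PSt X_{i-1} = X_{i-1}$ and $\| X_{i-1} - \PSt X_{i-1}\|_{\F} = 0$, and likewise for $X_{i+1}$. Substituting into~\eqref{eq:ext_dist_function}, any $\widetilde Y \in \Rnp$ with $\sigma_{p}(\widetilde Y) > 0$ admits the decomposition
\[
\widetilde F_{i}(\widetilde Y) = F_{i}(\PSt \widetilde Y) + 2\,\| \widetilde Y - \PSt \widetilde Y \|_{\F}^{2},
\]
in which both summands are nonnegative and only the first depends on $\PSt \widetilde Y$. Hence
\[
\widetilde F_{i}(\widetilde Y) \geq F_{i}(\PSt \widetilde Y) \geq F_{i}(Y^{\ast}) = \widetilde F_{i}(Y^{\ast}),
\]
and the region $\sigma_{p}(\widetilde Y) = 0$ is ruled out immediately because $\widetilde F_{i} = +\infty$ there while $\widetilde F_{i}(Y^{\ast})$ is finite. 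Equality throughout forces both $\|\widetilde Y - \PSt \widetilde Y\|_{\F} = 0$, so that $\widetilde Y \in \Stnp$, and $F_{i}(\widetilde Y) = F_{i}(Y^{\ast})$; by the uniqueness established in the first part, $\widetilde Y = Y^{\ast}$.

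The only real obstacle is the uniqueness of the Riemannian center of mass on $\Stnp$; everything else follows either from the algebraic identity above or from the fact that $X_{i-1}, X_{i+1}$ are themselves on the manifold. That uniqueness is already handled by Lemma~\ref{lemma:geodesic_exists_for_close_neighbours} under the standing hypothesis $d(X_{i-1},X_{i+1}) \leq \delta_{g}$, so no further geometric input is required.
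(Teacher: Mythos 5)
Your proposal is correct and follows essentially the same route as the paper: the core step is the identical decomposition $\widetilde F_i(\widetilde Y) = F_i(\PSt\widetilde Y) + 2\|\widetilde Y - \PSt\widetilde Y\|_{\F}^2$ (using that the fixed endpoints lie on $\Stnp$), the $\sigma_p(\widetilde Y)=0$ case handled via $+\infty$, and uniqueness pulled back to the on-manifold minimizer. Your triangle-inequality/AM--QM argument merely spells out why the unique minimizer of $F_i$ is the geodesic midpoint, a fact the paper delegates to Lemma~\ref{lemma:geodesic_exists_for_close_neighbours} and the definition of the Riemannian center of mass.
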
 
\begin{proof}
Since $d(X_{i-1},X_{i+1})\leq \delta_g$, Lemma~\ref{lemma:geodesic_exists_for_close_neighbours} gives that the minimizer of $F_i$ on $\Stnp$ is unique and equals the Riemannian center of mass $Y^{\ast}$. To show that it also equals the minimizer of $\widetilde F_i$ on $\Rnp$, take any $\widetilde Y \in \Rnp$. If $\sigma_k(\widetilde Y) > 0$, then we can write
\[
 \widetilde Y = Y + \Delta, \qquad Y = \PSt \widetilde Y \in \Stnp.
\]
Using that $Y^{\ast}$ is the minimizer of $F_i$ on $\Stnp$, we thus get
\[
 \widetilde F_i(\widetilde Y) = d^{2}(X_{i-1}, Y) + d^{2}(Y,X_{i+1}) + 2 \| \Delta \|_{\F}^{2} \geq F_i(Y) \geq F_i(Y^{\ast}).
\]
The same inequality holds trivially if $\sigma_k(\widetilde Y) = 0$ since then $\widetilde F_i(\widetilde Y) = +\infty$. Finally, since $\widetilde F_i(Y^{\ast}) = F_i(Y^{\ast})$, we obtain that $\widetilde F_i$ is also uniquely minimized by $Y^{\ast}$.   
\end{proof}

\begin{lemma}\label{lem:leapfrog_one_step_is_GS}
Suppose that for all iterations $k=0,1,\ldots$, the iterates of leapfrog satisfy
\[
   d(X_{i-1}^{(k)}, X_{i+1}^{(k-1)}) \leq \delta_g,
\]
for all $ i = 1, 2, \ldots, m-2 $. Then, the leapfrog algorithm started in $X^{(0)}$ generates the same iterates as the nonlinear Gauss--Seidel algorithm started in $X^{(0)}$ and applied to 
\[
 \min_{X_1,\ldots, X_{m-2} \in \Rnp} \widetilde F(X_{1}, \ldots, X_{m-2} ).
\]
\end{lemma}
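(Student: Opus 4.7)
The plan is to proceed by a double induction on the outer iteration index $k$ and the inner sweep index $i$, reducing each substep to Lemma~\ref{lemma:substep_leapfrog_equals_substep_GS}. The real work has already been done by that lemma, so the present statement is essentially a bookkeeping assembly.

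First, I would note that $\widetilde F$ is a sum of pairwise terms $\widetilde d^2(X_{j-1},X_j)$, and only the two terms indexed by $j=i$ and $j=i+1$ involve $X_i$. Hence the $i$th nonlinear Gauss--Seidel subproblem, obtained by specializing \eqref{eq:ith_nonlinear_eq} to $\nabla \widetilde F$ (or equivalently by minimizing $\widetilde F$ coordinate-block-wise), reads
\[
 \min_{X_i \in \Rnp}\; \widetilde d^2\!\bigl(X_{i-1}^{(k)},\, X_i\bigr) \;+\; \widetilde d^2\!\bigl(X_i,\, X_{i+1}^{(k-1)}\bigr)
\]
up to an additive constant in the other frozen variables. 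With the abbreviations $X_{i-1}:=X_{i-1}^{(k)}$ and $X_{i+1}:=X_{i+1}^{(k-1)}$, this is precisely $\min_{\widetilde Y \in \Rnp} \widetilde F_i(\widetilde Y)$ from Lemma~\ref{lemma:substep_leapfrog_equals_substep_GS}. On the leapfrog side, the $i$th substep of the $k$th sweep replaces $X_i^{(k-1)}$ by the minimizer of $F_i$ on $\Stnp$ for exactly the same pair of neighbours.

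The induction hypothesis is that at the start of the $i$th substep of the $k$th sweep, leapfrog and Gauss--Seidel share the identical partially updated tuple
\[
 \bigl(X_1^{(k)},\ldots,X_{i-1}^{(k)},\,X_i^{(k-1)},\ldots,X_{m-2}^{(k-1)}\bigr),
\]
with base case the common initialization $X^{(0)}$. Given this, the assumption $d(X_{i-1}^{(k)}, X_{i+1}^{(k-1)}) \leq \delta_g$ activates Lemma~\ref{lemma:substep_leapfrog_equals_substep_GS}: the two minimization problems above admit the same unique minimizer $Y^{\ast}\in\Stnp$, namely the Riemannian center of mass of $X_{i-1}^{(k)}$ and $X_{i+1}^{(k-1)}$. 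Consequently, the two methods produce the same updated $X_i^{(k)}=Y^{\ast}\in\Stnp$, and the induction advances to the $(i{+}1)$th substep (or to the start of sweep $k{+}1$ once $i=m-2$).

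I do not expect any genuine obstacle. The only mild subtlety is the indexing discipline: verifying that the ``current'' neighbour $X_{i-1}^{(k)}$ and the ``previous'' neighbour $X_{i+1}^{(k-1)}$ used in the Gauss--Seidel subproblem are exactly those consumed by leapfrog at the same substep, and that the freshly computed $X_i^{(k)}$, which automatically lies on $\Stnp$ even though it was obtained by an unconstrained minimization on $\Rnp$, slots correctly into the tuple for the next substep. Once these indices are aligned, Lemma~\ref{lemma:substep_leapfrog_equals_substep_GS} supplies the equality of updates at every step and the conclusion follows.
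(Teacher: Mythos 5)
Your proof is correct and follows essentially the same route as the paper: an induction over the substeps in which each leapfrog update and each Gauss--Seidel update are identified with the minimization of $F_i$ and $\widetilde F_i$, respectively, and Lemma~\ref{lemma:substep_leapfrog_equals_substep_GS} (activated by $d(X_{i-1}^{(k)},X_{i+1}^{(k-1)})\leq\delta_g$) gives that they share the same unique minimizer. Your added remark that only the two terms of $\widetilde F$ containing $X_i$ matter is exactly the observation the paper uses implicitly, so nothing is missing.
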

\begin{proof}
By induction. Suppose true until substep $i-1$ of iteration $k$. Then, leapfrog computes the new iterate as
\[
 X_i^{(k)} = \argmin_{Y \in \Stnp} d^2(X_{i-1}^{(k)},Y) + d^2(Y,X_{i+1}^{(k-1)}).
\]
The uniqueness of the minimizer follows from Lemma \ref{lemma:geodesic_exists_for_close_neighbours} and $d(X_{i-1}^{(k)}, X_{i+1}^{(k-1)}) \leq \delta_g$. Likewise, nonlinear Gauss--Seidel computes
\[
 \widetilde X_i^{(k)} = \argmin_{\widetilde Y \in \Rnp}  \widetilde F(X_1^{(k)}, \ldots, X_{i-1}^{(k)}, \widetilde Y, X_{i+1}^{(k-1)}, \ldots, X_{m-2}^{(k-1)}),
\]
and the uniqueness of the minimizer follows from our reasoning below. 
Both minimization problems are the same as minimizing $F_i$ and $\widetilde F_i$ from Lemma~\ref{lemma:substep_leapfrog_equals_substep_GS} but with $X_{i-1}^{(k)}$ and $X_{i+1}^{(k-1)}$ taking the roles of $X_{i-1}$ and $X_{i+1}$, respectively. By Lemma~\ref{lemma:substep_leapfrog_equals_substep_GS}, the minimizers of both problems are the same and hence $X_i^{(k)} = \widetilde X_i^{(k)}$. The above reasoning can also be applied to the base case $k=i=1$ since $X^{(1)}_0 = X^{(0)}_0$. Hence, we have proven the result.  
\end{proof}

If the initial points are close enough, the iterates in leapfrog stay close.
\begin{lemma}\label{lem:leapfrog_iterates_stay_close}
Let $X^{(0)} \in \Stnp^{m}$ be such that $d(X_{i-1}^{(0)}, X_{i}^{(0)}) \leq \tfrac{1}{2}\delta_g$ for all $1 \leq i \leq m -1 $. Then, leapfrog started at $X^{(0)}$ is well defined and all its iterates $X^{(k)}$ satisfy for all $1 \leq i \leq m-2$ and $k\geq 1$
\begin{equation}\label{eq:closer_points_during_leapfrog}
d(X_{i-1}^{(k)}, X_{i}^{(k)}) = d(X_{i}^{(k)}, X_{i+1}^{(k-1)}) \leq \tfrac{1}{2}\delta_g.
\end{equation}
\end{lemma}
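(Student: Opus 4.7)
My plan is to induct on the substeps of leapfrog, ordered lexicographically in the pair $(k,i)$. First I would isolate the easy part of \eqref{eq:closer_points_during_leapfrog}: the equality
\[
d(X_{i-1}^{(k)},X_i^{(k)}) \,=\, d(X_i^{(k)},X_{i+1}^{(k-1)}) \,=\, \tfrac{1}{2}\,d(X_{i-1}^{(k)},X_{i+1}^{(k-1)})
\]
is built into the definition of leapfrog, since $X_i^{(k)}$ is, by construction, the midpoint of the minimizing geodesic between $X_{i-1}^{(k)}$ and $X_{i+1}^{(k-1)}$. Thus the whole statement reduces to proving, at every substep, that this midpoint is well defined and that the two resulting equal segment lengths do not exceed $\tfrac{1}{2}\delta_g$.

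The invariant I would carry through the induction is: just before executing substep $i$ of iteration $k$, every consecutive segment length of the current broken geodesic
\[
(X_0,\; X_1^{(k)},\;\ldots,\; X_{i-1}^{(k)},\; X_i^{(k-1)},\;\ldots,\; X_{m-2}^{(k-1)},\; X_{m-1})
\]
is at most $\tfrac{1}{2}\delta_g$. The base case coincides exactly with the hypothesis of the lemma. For the inductive step, set $A := d(X_{i-1}^{(k)},X_i^{(k-1)})$ and $B := d(X_i^{(k-1)},X_{i+1}^{(k-1)})$, both $\leq \tfrac{1}{2}\delta_g$ by the invariant. The triangle inequality yields $d(X_{i-1}^{(k)},X_{i+1}^{(k-1)}) \leq A + B \leq \delta_g$, so Lemma~\ref{lemma:geodesic_exists_for_close_neighbours} applies and guarantees that the minimizing geodesic (and hence its midpoint) between $X_{i-1}^{(k)}$ and $X_{i+1}^{(k-1)}$ exists and is unique. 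This shows the substep is well defined and produces a uniquely determined $X_i^{(k)}$; moreover, the two new segments have length $\tfrac{1}{2}\,d(X_{i-1}^{(k)},X_{i+1}^{(k-1)}) \leq \tfrac{1}{2}(A+B) \leq \tfrac{1}{2}\delta_g$, while the remaining segments are untouched. Hence the invariant is preserved and \eqref{eq:closer_points_during_leapfrog} follows.

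I do not foresee a real obstacle here: the argument is essentially a one-line application of the triangle inequality combined with Lemma~\ref{lemma:geodesic_exists_for_close_neighbours}, and the boundary cases $i=1$ and $i=m-2$ are handled by the conventions $X_0^{(k)} = X_0$ and $X_{m-1}^{(k-1)} = X_{m-1}$. The only point worth stressing, and the reason the induction closes so cleanly, is that the leapfrog midpoint replaces two adjacent segments of lengths $A$ and $B$ by two equal segments each of length at most $\tfrac{1}{2}(A+B)$; consequently the maximum segment length is non-increasing along every \emph{substep}, not merely along full iterations, which is exactly what is required to keep the distance $d(X_{i-1}^{(k)},X_{i+1}^{(k-1)})$ below the uniqueness threshold $\delta_g$ throughout the algorithm.
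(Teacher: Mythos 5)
Your proof is correct and shares the paper's skeleton exactly: the same induction over substeps in lexicographic order of $(k,i)$, the same invariant that all segments of the current broken geodesic have length at most $\tfrac{1}{2}\delta_g$, and the same triangle-inequality step giving $d(X_{i-1}^{(k)},X_{i+1}^{(k-1)})\leq\delta_g$ so that Lemma~\ref{lemma:geodesic_exists_for_close_neighbours} yields well-definedness and uniqueness of the update. The only genuine difference is the final length bound. The paper uses the variational characterization of $X_i^{(k)}$ as the minimizer of $Y\mapsto d^2(X_{i-1}^{(k)},Y)+d^2(Y,X_{i+1}^{(k-1)})$, compares against the old point $X_i^{(k-1)}$ to get $d^2(X_{i-1}^{(k)},X_i^{(k)})+d^2(X_i^{(k)},X_{i+1}^{(k-1)})\leq\tfrac{1}{2}\delta_g^2$, and then combines this with the equality of the two new segments. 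You instead use the purely metric fact that the midpoint of the unique minimizing geodesic lies at distance exactly $\tfrac{1}{2}\,d(X_{i-1}^{(k)},X_{i+1}^{(k-1)})$ from each endpoint, so the bound $\leq\tfrac{1}{2}(A+B)\leq\tfrac{1}{2}\delta_g$ follows directly from the triangle inequality, without invoking the minimizing property at all. Both routes are valid; yours is marginally more elementary and makes explicit that the maximal segment length is non-increasing substep by substep, whereas the paper's version additionally records the monotone decrease of the local sum-of-squares objective, which fits naturally with the Gauss--Seidel (center-of-mass) interpretation used in Lemma~\ref{lemma:substep_leapfrog_equals_substep_GS}.
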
 
\begin{proof}
By induction. Suppose true for all substeps $i$ until iteration $k-1$ and until substep $i-1$ of iteration $k$. This implies in particular
\[
 d(X_{i-1}^{(k)}, X_{i}^{(k-1)}) \leq \tfrac{1}{2}\delta_g , \quad d(X_{i}^{(k-1)}, X_{i+1}^{(k-1)}) \leq \tfrac{1}{2} \delta_g.
\]
By triangle inequality for the Riemannian distance,
\[
 d(X_{i-1}^{(k)}, X_{i+1}^{(k-1)}) \leq d(X_{i-1}^{(k)}, X_{i}^{(k-1)}) + d(X_{i}^{(k-1)}, X_{i+1}^{(k-1)}) \leq \delta_g,
 \]
 Lemma~\ref{lemma:geodesic_exists_for_close_neighbours} gives that the leapfrog iteration is well defined and produces the unique minimizer
\[
 X_i^{(k)} = \argmin_{Y \in \Stnp} d^2(X_{i-1}^{(k)},Y) + d^2(Y,X_{i+1}^{(k-1)}).
\]
We thus have
\[
 d^2(X_{i-1}^{(k)},X_i^{(k)}) + d^2(X_i^{(k)},X_{i+1}^{(k-1)}) \leq d^2(X_{i-1}^{(k)},X_i^{(k-1)}) + d^2(X_i^{(k-1)},X_{i+1}^{(k-1)}) \leq \tfrac{1}{2} \delta_g^2.
\]
Since $X_i^{(k)}$ is the midpoint of the geodesic connecting $X_{i-1}^{(k)}$ to $X_{i+1}^{(k-1)}$, we also have
\[
  d(X_{i-1}^{(k)},X_i^{(k)}) = d(X_i^{(k)},X_{i+1}^{(k-1)}).
\]
Combining these two results proves~\eqref{eq:closer_points_during_leapfrog} until substep $i$ at iteration $k$. Since $X_0^{(k+1)} = X_0^{(k)} = X_0^{(0)}$, the case for substep $i=1$ and iteration $k+1$ satisfies the same reasoning as above. The same is true for the base case $i=k=1$, which ends the proof.  
\end{proof}

Hence, combining Lemmas~\ref{lem:leapfrog_one_step_is_GS} and~\ref{lem:leapfrog_iterates_stay_close}, we get our desired result:

\begin{proposition}\label{prop:leapfrog_is_GS}
Let $X^{(0)} \in \Stnp^{m}$ be such that $d(X_{i-1}^{(0)}, X_{i}^{(0)}) \leq \tfrac{1}{2}\delta_g$ for all $1 \leq i \leq m$. Then the leapfrog algorithm applied to $F$ is equivalent to the nonlinear Gauss--Seidel method applied to $\widetilde F$.
\end{proposition}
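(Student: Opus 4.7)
The plan is to combine the two preceding lemmas in the natural way, checking only that the hypothesis of Lemma~\ref{lem:leapfrog_one_step_is_GS}, namely $d(X_{i-1}^{(k)}, X_{i+1}^{(k-1)}) \leq \delta_g$ for every $k \geq 1$ and every $1 \leq i \leq m-2$, is implied by the starting assumption $d(X_{i-1}^{(0)}, X_{i}^{(0)}) \leq \tfrac{1}{2}\delta_g$. Since Lemma~\ref{lem:leapfrog_iterates_stay_close} has been proved under exactly this initial condition, it will do the heavy lifting.

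First, I would invoke Lemma~\ref{lem:leapfrog_iterates_stay_close} to conclude that leapfrog is well defined from $X^{(0)}$ and that for every $k \geq 1$ and every $1 \leq i \leq m-2$ the iterates satisfy $d(X_{i-1}^{(k)}, X_{i}^{(k)}) = d(X_{i}^{(k)}, X_{i+1}^{(k-1)}) \leq \tfrac{1}{2}\delta_g$. Next, I would combine the bound for consecutive segments with the triangle inequality for the Riemannian distance to obtain
\[
 d(X_{i-1}^{(k)}, X_{i+1}^{(k-1)}) \leq d(X_{i-1}^{(k)}, X_{i}^{(k-1)}) + d(X_{i}^{(k-1)}, X_{i+1}^{(k-1)}) \leq \tfrac{1}{2}\delta_g + \tfrac{1}{2}\delta_g = \delta_g,
\]
which is exactly the hypothesis required by Lemma~\ref{lem:leapfrog_one_step_is_GS}. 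A small bookkeeping point is that one needs these bounds for the \emph{pair} used at substep $i$ of iteration $k$, i.e.\ the already updated $X_{i-1}^{(k)}$ together with the not-yet-updated $X_{i+1}^{(k-1)}$; this is precisely what the induction inside Lemma~\ref{lem:leapfrog_iterates_stay_close} delivers.

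Finally, I would apply Lemma~\ref{lem:leapfrog_one_step_is_GS} to conclude that the leapfrog iterates coincide with the iterates of nonlinear block Gauss--Seidel applied to $\widetilde F$ starting from the same $X^{(0)}$, which is the claim of the proposition. I do not expect a real obstacle here: the two lemmas were tailored so that the hypothesis of the first is produced by the conclusion of the second via one triangle inequality. The only thing worth flagging is a minor indexing check, namely that the range $1\leq i\leq m$ in the statement matches the range $1 \leq i \leq m-1$ used in Lemma~\ref{lem:leapfrog_iterates_stay_close} (the segments between $X_0,\dots,X_{m-1}$); no condition is needed for $i=m$ since $X_{m-1}$ is a fixed endpoint, so the hypothesis is equivalent to the one of the lemma and the argument goes through unchanged.
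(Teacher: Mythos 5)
Your proposal is correct and is essentially the paper's own argument: the proposition is obtained precisely by combining Lemma~\ref{lem:leapfrog_iterates_stay_close} with Lemma~\ref{lem:leapfrog_one_step_is_GS}, with the triangle inequality you spell out (the same one used inside the proof of Lemma~\ref{lem:leapfrog_iterates_stay_close}) supplying the hypothesis $d(X_{i-1}^{(k)}, X_{i+1}^{(k-1)}) \leq \delta_g$. Your remark about the index range $1 \leq i \leq m$ versus $1 \leq i \leq m-1$ is also the right reading of the statement.
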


We can now proceed and analyze the convergence of this nonlinear Gauss--Seidel method using standard theory.

\subsection{First-order optimality}
From Prop.~\ref{prop:leapfrog_is_GS}, we know that at iteration $k \geq 1$ and for subinterval $i \in \{ 1,\ldots, m-2\}$, leapfrog solves the following unconstrained optimization problem 
\begin{equation*}
   \min_{X_{i} \in \Rnp} \widetilde F_{i}^{k}(X_{i}),
\end{equation*}
where the objective function is defined as
\begin{equation*}
    \widetilde F_i^{k}(Y) = \widetilde d^2(X_{i-1}^{(k)}, Y) + \widetilde d^2(Y,X_{i+1}^{(k-1)}).
\end{equation*}
Recall that $X_{i-1}^{(k)}, X_{i+1}^{(k-1)} \in \Stnp$ are the neighboring points of $X_i$ and that $X_{i-1}^{(k)}$ was previously updated and that $X_{i+1}^{(k-1)}$ will be updated next.

Let us define
\begin{equation*}
    \cG_{i}(Y) = \nabla_{Y}  \widetilde F_{i}^{k}(Y) = \nabla_{Y} \widetilde{d}^{2}(X_{i-1}^{(k)},Y) + \nabla_{Y} \widetilde{d}^{2} (X_{i+1}^{(k-1)}, Y).
\end{equation*}
At the minimizer $X_i$, the gradient of $\widetilde F_i^{k}$ vanishes, i.e., $ \cG_{i}(X_{i}) = 0 $. Likewise, if we take all the minimizers $X = (X_1, \ldots, X_{m-2})$ together, they will satisfy 
\begin{equation*}
    \begin{cases}
         \cG_{1}(X_{}) = \nabla_{X_{1}} \widetilde{d}^{2}(X_{0},X_{1}) + \nabla_{X_{1}} \widetilde{d}^{2} (X_{1}, X_{2}) = 0, \\
         \cG_{2}(X_{}) =\nabla_{X_{2}} \widetilde{d}^{2}(X_{1},X_{2}) + \nabla_{X_{2}} \widetilde{d}^{2} (X_{2}, X_{3}) = 0, \\
         \qquad \vdots \\
         \cG_{m-2}(X_{}) =\nabla_{X_{m-2}} \widetilde{d}^{2}(X_{m-3},X_{m-2}) + \nabla_{X_{m-2}} \widetilde{d}^{2} (X_{m-2}, X_{m-1}) = 0.
    \end{cases}
\end{equation*}
This can be written compactly as $\cG(X)=0$, where $\cG$ is defined componentwise $ \cG_{i} \colon
\Rnp \to \Rnp $, for $i = 1,\ldots, m-2$.

\subsection{Known results on local convergence}
Assuming convergence to the limit point $X_1^{\ast}, X_2^{\ast}, \ldots, X_{m-2}^{\ast}$, the asymptotic convergence rate is determined by the spectral radius of a certain blockwise partitioning of the Hessian of $\widetilde F$ at this limit point.

\begin{theorem}[Nonlinear block Gauss--Seidel theorem]\label{thm:asymptotic_speed_nonlinearBGS}
Let $ \cG \colon \cD \subset \R^{(m-2)np} \to \R^{(m-2)np} $ be continuously differentiable in an open neighborhood $ \cB_{0} \subset \cD $ of a point $ X^{\ast} \in \cD $ for which $ \cG(X^{\ast}) = 0 $. Consider the decomposition of $ \cG' = D - L - U $ into its block diagonal, strictly lower-, and strictly upper-triangular parts, and suppose that $ D(X^{\ast} ) $ is nonsingular and $ \rho( M^{\mathrm{BGS}}(X^{\ast} ) ) < 1 $, where $ M^{\mathrm{BGS}} = (D-L)^{-1} U $. Then there exists an open ball $ \cB = \cB(X^{\ast}, \delta ) $ in $ \cB_{0} $ such that, for any $ X^{(0)} \in \cB $, there is a unique sequence $ \lbrace X^{(k)} \rbrace \subset \cB $ which satisfies the nonlinear Gauss--Seidel prescription. Moreover, $ \lim_{k\to\infty} X^{(k)} = X^{\ast} $ and for any $ X^{(0)} \in \cB_{0} $, the convergence rate in the form $ \limsup_{k\to \infty} \sqrt[k]{\| X^{(k)} - X^{\ast} \|} $ is upper bounded by $ \rho( M^{\mathrm{BGS}}(X^{\ast} ) ) $.
\end{theorem}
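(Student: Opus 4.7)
The plan is to interpret one full nonlinear block Gauss--Seidel sweep as a fixed-point map $\Phi$ defined near $X^{\ast}$, to compute $\Phi'(X^{\ast})$, and then to invoke the Ostrowski fixed-point theorem. The point of the derivative computation is to show that $\Phi'(X^{\ast})$ is precisely the classical block Gauss--Seidel iteration matrix $M^{\mathrm{BGS}}(X^{\ast}) = (D-L)^{-1}U$ evaluated at $X^{\ast}$, at which moment the spectral-radius hypothesis converts directly into local convergence at the stated rate.

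First I would show that $\Phi$ is well defined and $C^1$ on some open ball around $X^{\ast}$. Decompose $\Phi = \Phi_{m-2}\circ\cdots\circ\Phi_{1}$, where $\Phi_i$ leaves all blocks fixed except the $i$th, which it replaces by the solution $y$ of
\[
   \cG_i(X_1,\ldots,X_{i-1},y,X_{i+1},\ldots,X_{m-2}) = 0.
\]
Local existence, uniqueness and smoothness of $y$ as a function of the other blocks follow from the implicit function theorem, whose hypothesis is the invertibility of $\partial_{X_i}\cG_i(X^{\ast})$. This partial Jacobian is exactly the $i$th block of $D(X^{\ast})$, which is nonsingular by assumption. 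Hence each $\Phi_i$, and therefore $\Phi$, is $C^1$ in a neighborhood of $X^{\ast}$, and $\Phi(X^{\ast}) = X^{\ast}$ because $\cG(X^{\ast})=0$.

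Second, I would compute $\Phi'(X^{\ast})$ by implicit differentiation of the defining equations. If $dX^{(k)}$ denotes the increment produced by one sweep starting from the increment $dX^{(k-1)}$, differentiation of the defining equation of $\Phi_i$ at $X^{\ast}$ gives, for every $i=1,\ldots,m-2$,
\[
   \partial_{X_i}\cG_i \cdot dX_i^{(k)} + \sum_{j<i}\partial_{X_j}\cG_i \cdot dX_j^{(k)} + \sum_{j>i}\partial_{X_j}\cG_i \cdot dX_j^{(k-1)} = 0,
\]
where the partials are taken at $X^{\ast}$. Stacking these equations and using the decomposition $\cG'(X^{\ast}) = D-L-U$, the linearized sweep reads $(D-L)\,dX^{(k)} = U\,dX^{(k-1)}$, so $\Phi'(X^{\ast}) = (D-L)^{-1}U = M^{\mathrm{BGS}}(X^{\ast})$.

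Finally, since $\rho(\Phi'(X^{\ast})) < 1$, one chooses an operator norm $\|\cdot\|_\star$ adapted to $\Phi'(X^{\ast})$ such that $\|\Phi'(X^{\ast})\|_\star \le \rho(M^{\mathrm{BGS}}(X^{\ast})) + \varepsilon < 1$ for arbitrarily small $\varepsilon>0$; continuity of $\Phi'$ then gives a ball $\cB = \cB(X^{\ast},\delta)\subset\cB_0$ on which $\|\Phi'\|_\star \le q<1$. Hence $\Phi$ is a contraction on $\cB$, the iterates remain in $\cB$, are uniquely determined, converge to $X^{\ast}$, and satisfy $\|X^{(k)}-X^{\ast}\|_\star \le q^k\|X^{(0)}-X^{\ast}\|_\star$; taking $k$th roots and letting $\varepsilon\to 0$ yields the asymptotic bound $\limsup_{k\to\infty}\sqrt[k]{\|X^{(k)}-X^{\ast}\|}\le\rho(M^{\mathrm{BGS}}(X^{\ast}))$. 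The main obstacle in this argument is the bookkeeping in the derivative computation: one must carefully track that composing the block Jacobians of $\Phi_1,\ldots,\Phi_{m-2}$ and identifying their joint linearization produces exactly $(D-L)^{-1}U$ with the given orientations of $L$ and $U$, rather than some other product of triangular factors. Everything else reduces to routine applications of the implicit function and Ostrowski theorems.
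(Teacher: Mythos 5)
Your argument is correct, and it is essentially the same approach as the paper's: the paper disposes of this theorem with a one-line citation to \cite[Theorem~10.3.5]{Ortega:2000} (extended to the block case), and what you have written out is precisely the standard proof underlying that reference --- define the one-sweep map $\Phi$ via the implicit function theorem (nonsingularity of the diagonal blocks of $D(X^{\ast})$ being exactly the needed hypothesis), verify $\Phi(X^{\ast})=X^{\ast}$, show by implicit differentiation of the stacked substep equations that $(D-L)\,dX^{(k)}=U\,dX^{(k-1)}$ so that $\Phi'(X^{\ast})=(D-L)^{-1}U$, and conclude by Ostrowski's attraction theorem. Two small points of care, neither fatal: when you pass from the contraction estimate $\|X^{(k)}-X^{\ast}\|_\star\le q^k\|X^{(0)}-X^{\ast}\|_\star$ to the $\limsup$ bound, the adapted norm (and hence the ball) depends on $\varepsilon$, so the clean way to let $\varepsilon\to 0$ is to note that the already-established convergence $X^{(k)}\to X^{\ast}$ lets you restart the estimate inside the $\varepsilon$-dependent ball for each fixed $\varepsilon$; and the rate statement should be read for initial points in $\cB$ (as in your proof) rather than literally ``for any $X^{(0)}\in\cB_{0}$'' as the theorem's wording suggests.
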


\begin{proof}
   As a direct extension of \cite[Theorem~10.3.5]{Ortega:2000}.  
\end{proof}

This theorem shows the need for the Hessian of $ \widetilde{F} $ (i.e., $ \cG' $) and its block $ D-L-U $ decomposition. As we shall see, our matrix $ \cG' $ is given by the sum of two matrices $ \cG' = A + E $, where $ A $ is symmetric block tridiagonal and positive definite, and $ E $ can be regarded as a perturbation matrix. Since it is very difficult to compute the spectral radius of $ M^{\mathrm{BGS}} $ with this perturbation $ E $, we will not use Theorem~\ref{thm:asymptotic_speed_nonlinearBGS} directly. Instead, we will use the \emph{Householder--John theorem} \cite[Corollary~3.42]{Hackbusch:2016aa}, which states that if $ \cG' $ is positive definite, then the $ M^{\mathrm{BGS}} $ from Theorem \ref{thm:asymptotic_speed_nonlinearBGS} satisfies $ \rho(M^{\mathrm{BGS}}) < 1 $.
In other words, (linear) block Gauss--Seidel for a symmetric and positive definite $ \cG' $ always converges monotonically in the energy norm \cite[Theorem~3.53]{Hackbusch:2016aa}. Therefore, we only need to restrict the perturbation $ E $ such that the whole matrix $ \cG' $ is symmetric and positive definite. In order to do that, we will also use a block version of the Gershgorin circle theorem \cite[Theorem~2]{Feingold:1962aa}.

\subsection{Local convergence}\label{sec:local_convergence}
As required in Theorem~\ref{thm:asymptotic_speed_nonlinearBGS}, we compute the Hessian as the Jacobian matrix $\cG'(X)$, a square matrix of size $(m-2)np$. By symmetry of the Hessian, we can write this compactly as
\begin{equation*}
\cG' = \begin{bmatrix}
        D_{10} + D_{12} &  L_{12}\tr  &   &  & \\
        L_{12} &  D_{21} + D_{23}  &  L_{23}\tr &  & \\
          & \ddots  &  \ddots & \ddots \\
          &  & L_{m-3,m-2} & D_{m-2,m-3} + D_{m-2,m-1}
     \end{bmatrix},
\end{equation*} 
where 
\[
 L_{ij} = \nabla_{X_{i}} \nabla_{X_{j}} \widetilde{d}^{2}(X_{i},X_{j}) \qquad \text{and} \qquad
 D_{ij} = \nabla^2_{X_{i}} \widetilde{d}^{2}(X_{i},X_{j}) 
\] 
denote the mixed and double derivatives\footnote{Observe that $L_{ij} = L_{ji}\tr$ by equality of mixed derivatives but in general $D_{ij} \neq D_{ji}\tr$ since only the variable corresponding to the first index is derived.}.

We now turn to the computation of these derivatives $L_{ij}$ and $D_{ij}$. To that end, the following lemma is convenient since it writes $\widetilde{d}^{2}(X_{i},X_{j})$ as an expansion that does not explicitly use the Riemannian distance.

\begin{lemma}\label{lemma:expansion_ext_dist_function}
Let $\widetilde{X}, \widetilde{Y} \in \Rnp $ such that $\sigma_p(\widetilde X) > 0$ and $\sigma_p(\widetilde Y) > 0$, then
\begin{equation}\label{eq:expansion_ext_dist_function}
\begin{split}
   \widetilde{d}^2(\widetilde{X}, \widetilde{Y})  = & \ \| \PSt\widetilde{X} - \PSt\widetilde{Y} \|^{2}_{\F} - \tfrac{1}{2} \| I_{p} - \big(\PSt\widetilde{X}\big)\tr\PSt\widetilde{Y}\|^{2}_{\F} \\
   & + \| \widetilde{X} - \PSt\widetilde{X} \|_{\F}^{2} + \| \widetilde{Y} - \PSt\widetilde{Y} \|_{\F}^{2} + \cO (\| \PSt\widetilde{X} - \PSt\widetilde{Y} \|^{4}_{\F}).
\end{split}
\end{equation}
\end{lemma}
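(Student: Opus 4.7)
The overall plan is to reduce the claim to an expansion of the canonical Riemannian distance on $\Stnp$ and then substitute. Since $\sigma_p(\widetilde X)>0$ and $\sigma_p(\widetilde Y)>0$, the polar projection $\PSt$ is uniquely defined at both arguments, so the first branch of~\eqref{eq:ext_dist_function} gives
\[
    \widetilde d^2(\widetilde X,\widetilde Y) = d^2(\PSt\widetilde X,\PSt\widetilde Y) + \|\widetilde X-\PSt\widetilde X\|_\F^2 + \|\widetilde Y-\PSt\widetilde Y\|_\F^2.
\]
The last two terms already appear verbatim in~\eqref{eq:expansion_ext_dist_function}, so what remains is to expand $d^2(X,Y)$ for $X=\PSt\widetilde X$ and $Y=\PSt\widetilde Y$ in $\Stnp$ in terms of $\|X-Y\|_\F$ and $\|I_p-X\tr Y\|_\F$. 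This is exactly the content of the asymptotic formula referenced in~\eqref{eq:expansion_canonical_dist}.

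To establish the canonical-distance expansion itself, I would parametrize $Y=\Exp_X(t\xi)$ with $\xi = X\Omega + X_\perp K \in T_X\Stnp$ and work at small $t$. From~\eqref{eq:formula_canonical_metric}, together with $\xi\tr\xi = -\Omega^2 + K\tr K$ and $\xi\tr XX\tr\xi = -\Omega^2$, one gets
\[
    d^2(X,Y) = t^2 g_c(\xi,\xi) = t^2\bigl(\tfrac12\|\Omega\|_\F^2 + \|K\|_\F^2\bigr).
\]
Taylor-expanding the matrix exponential in~\eqref{eq:closed-form-sol-geodesic} then gives
\[
    X\tr Y = I_p + t\Omega + \tfrac{t^2}{2}(\Omega^2 - K\tr K) + \mathcal{O}(t^3),
\]
from which, via $\|X-Y\|_\F^2 = 2\trace(I_p - X\tr Y)$, one finds $\|X-Y\|_\F^2 = t^2(\|\Omega\|_\F^2 + \|K\|_\F^2) + \mathcal{O}(t^4)$ and $\|I_p - X\tr Y\|_\F^2 = t^2\|\Omega\|_\F^2 + \mathcal{O}(t^4)$. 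Eliminating $\|\Omega\|_\F^2$ and $\|K\|_\F^2$ between these three relations yields the desired identity $d^2(X,Y) = \|X-Y\|_\F^2 - \tfrac12\|I_p-X\tr Y\|_\F^2 + \mathcal{O}(\|X-Y\|_\F^4)$, which substituted into the decomposition above produces~\eqref{eq:expansion_ext_dist_function}.

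I expect the main obstacle to be verifying that all potential $\mathcal{O}(t^3)$ contributions in the two Frobenius-norm expansions indeed vanish, since only then is the error truly $\mathcal{O}(\|X-Y\|_\F^4)$ rather than $\mathcal{O}(\|X-Y\|_\F^3)$. For $\trace(I_p - X\tr Y)$ the potential cubic terms are traces of odd powers of the skew matrix $\Omega$ and of the mixed expression $\Omega K\tr K$; the first vanishes by skew-symmetry, and the second by the identity $\trace(\Omega K\tr K) = -\trace(\Omega K\tr K)$ obtained by cycling and transposing inside the trace. The same two identities kill the cubic cross terms in $\|I_p - X\tr Y\|_\F^2$. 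Once these cancellations are recorded, the remaining computation is mechanical bookkeeping and the lemma follows.
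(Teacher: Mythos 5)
Your proposal is correct, and it rests on the same ingredients as the paper's proof: reduce~\eqref{eq:expansion_ext_dist_function} to an expansion of the canonical distance between the projected points, then get that expansion from the closed-form geodesic~\eqref{eq:closed-form-sol-geodesic} and the canonical metric~\eqref{eq:formula_canonical_metric}, using $g_c(\xi,\xi)=\tfrac12\|\Omega\|_{\F}^2+\|K\|_{\F}^2$. The route differs in how the error order is controlled. The paper writes $d^2(X,Y)=\|\xi\|_{\F}^2-\tfrac12\|X\tr\xi\|_{\F}^2$ with $\xi=\Log_X(Y)$, identifies $\xi=Y-X+\cO(\|\xi\|_{\F}^2)$ from the exponential expansion, and then outsources the quartic control of $\|\xi\|_{\F}^2$ versus $\|X-Y\|_{\F}^2$ to an external result (Belkin's lemma), whereas you stay entirely inside a power series in $t$ for $Y=\Exp_X(t\xi)$, expand $\trace(X\tr Y)$ and $\|I_p-X\tr Y\|_{\F}^2$ to order $t^4$, and eliminate $\|\Omega\|_{\F}^2$, $\|K\|_{\F}^2$. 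This makes your argument self-contained and, in fact, more explicit on the one delicate point: the vanishing of the cubic terms via $\trace(\Omega^3)=0$ and $\trace(\Omega K\tr K)=0$, a cancellation the paper leaves implicit when it replaces $\xi$ by $Y-X$ inside $\|X\tr\xi\|_{\F}^2$ (a priori this substitution could cost $\cO(\|X-Y\|_{\F}^3)$). Two small things to state for completeness: the expansion is asymptotic and presupposes $\PSt\widetilde X$ and $\PSt\widetilde Y$ close enough that the minimizing geodesic and logarithm exist (as the paper notes via Remark~\ref{rmk:distance}), and $t$ is comparable to $\|X-Y\|_{\F}$ (e.g.\ normalize $\|\xi\|=1$) so that $\cO(t^4)=\cO(\|X-Y\|_{\F}^4)$.
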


\begin{proof} See App.~\ref{app:proof_expansion_ext_dist_function}.  \end{proof}

In the following, denote $\delta_{ij} = \| X_i - X_j \|_2$ for any $X_i, X_j \in \Stnp$.

\begin{lemma}\label{lemma:hessian_at_X_Z_on_Stiefel}
Let $ X_{i} \in \Stnp $. Then
\begin{align}
    D_{ij} &= 2 I_{np} + \tfrac{1}{2}\,(
    X_{i}\tr  \otimes X_{i} ) \, \Pi_{p,n} - \tfrac{1}{2}\,(I_{p} \otimes
    X_{i}X_{i}\tr  ) + \Delta_{ij},  \label{eq:Dij_on_Stiefel} \\
    L_{ij} &= - 2I_{np} + \tfrac{1}{2}(X_i\tr \otimes X_i) \, \Pi_{p,n} + \tfrac{3}{2}(I_{p}\otimes X_iX_i\tr) + \Lambda_{ij}, \label{eq:Lij_on_Stiefel}
\end{align}    
with $ \| \Delta_{ij} \|_2 \leq 14 \delta_{ij} + 10 \delta_{ij}^2 $ and $\| \Lambda_{ij} \|_2 \leq \tfrac{11}{2} \delta_{ij} + 10 \delta_{ij}^2 + 4 \delta_{ij}^3$. Here, $\Pi_{p,n}$ is the vec-permutation matrix defined as the matrix that satisfies $\vecop (X) = \Pi_{n,p} \vecop (X\tr)$; see, e.g., \cite[Eq.~(5)]{Henderson:1981}.
\end{lemma}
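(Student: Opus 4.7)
The plan is to start from the expansion provided by Lemma~\ref{lemma:expansion_ext_dist_function} and differentiate each summand twice, specializing to $X_i, X_j \in \Stnp$ (where $\PSt X_i = X_i$ and $\PSt X_j = X_j$). Write $\widetilde d^2(\widetilde X, \widetilde Y) = f_1 + f_2 + f_3 + f_4 + R$ with $f_1 = \|\PSt\widetilde X - \PSt\widetilde Y\|_{\F}^2$, $f_2 = -\tfrac{1}{2}\|I_p - (\PSt\widetilde X)\tr \PSt\widetilde Y\|_{\F}^2$, $f_3 = \|\widetilde X - \PSt\widetilde X\|_{\F}^2$, $f_4 = \|\widetilde Y - \PSt\widetilde Y\|_{\F}^2$, and $R = \cO(\|\PSt\widetilde X - \PSt\widetilde Y\|_{\F}^4)$. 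The directional derivatives of the polar projection at a Stiefel point come from the standard linearization $\PSt(X + tH) = X + t\bigl( H - X\,\mathrm{sym}(X\tr H) \bigr) + \cO(t^2)$ of $Z(Z\tr Z)^{-1/2}$ at $Z = X$ with $X\tr X = I_p$. Combining this with the vec--Kronecker identities $\vecop(A\tr) = \Pi_{p,n}\vecop(A)$ and $\vecop(AXB) = (B\tr \otimes A)\vecop(X)$ allows us to express every contribution to $D_{ij}$ and $L_{ij}$ as matrices acting on $\vecop(H)$.

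Next, I would evaluate everything at the ``diagonal'' case $X_i = X_j = X \in \Stnp$, where the residual $R$ and its first two derivatives vanish. There the term $f_1$ reduces to $\|\widetilde X - \widetilde Y\|_{\F}^2$ at leading order, yielding the $2I_{np}$ in $D_{ij}$ and the $-2I_{np}$ in $L_{ij}$. The quadratic form $f_2$ contributes Kronecker pieces in $(X\tr \otimes X)\Pi_{p,n}$ and $(I_p \otimes XX\tr)$ with definite signs to both $D_{ij}$ and $L_{ij}$; and $f_3, f_4$, which penalize departures from $\Stnp$, feed only into $D_{ij}$ through Hessians of the form $2\|\mathrm{sym}(X\tr H)\|_{\F}^2$ in $H$, whose vec-representation also splits into $(I_p \otimes XX\tr)$ and $(X\tr \otimes X)\Pi_{p,n}$. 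Adding up the coefficients and signs across $f_1,\ldots,f_4$ reproduces exactly the leading expressions in~\eqref{eq:Dij_on_Stiefel} and~\eqref{eq:Lij_on_Stiefel}.

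Finally, for the general case $X_i \neq X_j$ I would define $\Delta_{ij}$ and $\Lambda_{ij}$ as the deviations of the full Hessian blocks from these diagonal expressions. To bound their spectral norm by $\delta_{ij} = \|X_i - X_j\|_2$, I would Taylor expand each contribution around $X_j = X_i$: every summand of $D_{ij}$ and $L_{ij}$ is a smooth function of $(X_i, X_j)$ on the open set $\{\sigma_p > 0\}$, and its first-order expansion in $X_j - X_i$ yields a remainder linear in $\|X_i - X_j\|_2$. The Hessian of the fourth-order residual $R$ from Lemma~\ref{lemma:expansion_ext_dist_function} is $\cO(\|X_i - X_j\|_{\F}^2)$ and produces the $\delta_{ij}^2$ term, while the $\delta_{ij}^3$ contribution to $\Lambda_{ij}$ comes from the mixed derivative of $R$ combined with a further polar-projection expansion. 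The main obstacle is arithmetic rather than conceptual: producing the explicit constants $14$, $10$, $\tfrac{11}{2}$, and $4$ requires enumerating every cross-term, estimating its operator norm using $\|X_i\|_2 = \|X_j\|_2 = 1$, $\|\Pi_{p,n}\|_2 = 1$, and $\|X_i - X_j\|_2 = \delta_{ij}$, and summing the resulting bounds.
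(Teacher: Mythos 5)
Your overall strategy is the same as the paper's: start from the expansion of $\widetilde d^2$ in Lemma~\ref{lemma:expansion_ext_dist_function}, compute $D_{ij}$ and $L_{ij}$ by expanding the polar projector, read off the leading Kronecker terms at $X_i=X_j$, and treat $X_j=X_i+\Delta$ as a perturbation. Your diagonal bookkeeping is in fact correct and rather clean: writing the normal projector as $\tfrac12(I_p\otimes X X\tr)+\tfrac12(X\tr\otimes X)\Pi_{p,n}$, the contributions $2P_T$ (from $f_1$), $-P_V$ resp.\ $+P_V$ (from $f_2$, with $P_V$ the projector onto $\{X\Omega,\ \Omega\ \text{skew}\}$), and $2P_N$ (from $f_3$) sum exactly to the stated leading parts of $D_{ij}$ and $L_{ij}$ (note the $2I_{np}$ is \emph{not} produced by $f_1$ alone, but by $2P_T+2P_N$).

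The genuine gap is in the perturbation part, which is the actual content of the lemma. First, your only tool for the projector is the first-order linearization $\PSt(X+tH)=X+t\bigl(H-X\,\mathrm{sym}(X\tr H)\bigr)+\cO(t^2)$. That suffices at the diagonal only because there the outer gradients vanish (e.g.\ $I_p-(\PSt\widetilde X)\tr\PSt\widetilde Y=0$ and $\PSt\widetilde X-\PSt\widetilde Y=0$), killing the chain-rule terms involving $D^2\PSt$. Once $X_j=X_i+\Delta$ with $\Delta\neq 0$, those outer gradients are $\cO(\delta_{ij})$, so the second derivative of the polar factor (the $\tfrac38$-terms in the paper's expansion of $\widetilde X(\widetilde X\tr\widetilde X)^{-1/2}$) enters $\Delta_{ij}$ at order $\delta_{ij}$ and changes the leading constant; with only the first-order linearization you cannot even write down, let alone bound, these contributions. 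Second, ``Taylor expand and appeal to smoothness'' yields $\|\Delta_{ij}\|_2=\cO(\delta_{ij})$ with an unspecified constant, whereas the lemma asserts the explicit bounds $14\delta_{ij}+10\delta_{ij}^2$ and $\tfrac{11}{2}\delta_{ij}+10\delta_{ij}^2+4\delta_{ij}^3$; the paper obtains these by deriving the full Hessian and mixed-derivative formulas at general $(X,Y)\in\Stnp\times\Stnp$ in Kronecker form, substituting $Y=X+\Delta$, and bounding every term using $\|X\|_2=1$, $\|\Pi_{p,n}\|_2=1$ --- this enumeration is the bulk of the proof and is not subsumed by a generic smoothness argument. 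Finally, your attribution of the $\delta_{ij}^2$ and $\delta_{ij}^3$ terms to the $\cO(\|\cdot\|^4)$ remainder $R$ is off: in the paper these powers arise from the explicit quadratic and cubic terms in $\Delta$ of the expanded Hessian and mixed-derivative expressions, not from differentiating the remainder.
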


\begin{proof} See App.~\ref{app:proof_hessian_at_X_Z_on_Stiefel}.  \end{proof}

Our aim is to diagonalize $\cG'$. We will do this in a few steps. First, observe that $\cG'$ remains block tridiagonal if it is transformed using a compatible block diagonal matrix $\cQ = \diag\lbrace Q_1, Q_2, \ldots, Q_{m-2} \rbrace $:

\begin{equation}\label{eq:block_QGQ}
  \cQ\tr \cG' \cQ = \begin{bmatrix}
        Q_1\tr (D_{10} + D_{12}) Q_1  &  Q_1\tr L_{12}\tr Q_2   &   &  & \\
        Q_2\tr L_{12} Q_1 &  Q_2\tr (D_{21} + D_{23})Q_2   &  Q_2\tr L_{23}\tr Q_3 &  & \\
         & \ddots  &  \ddots & \ddots \\
     \end{bmatrix},
\end{equation}

Here, the $Q_1, \ldots, Q_{m-2} \in \R^{np \times np}$ can be any orthogonal matrices. The lemma below shows us how to choose these matrices so that we obtain diagonal blocks in $\cQ\tr \cG' \cQ$, up to first order in $\delta_{ij}$.  

\begin{lemma}\label{lemma:diagonalization_hessian_on_Stiefel}
Let $X_i^\perp \in \R^{n \times (n-p)}$ be such that $X_i\tr X_i^{\perp} = O_{p \times (n-p)} $ and $(X_i^{\perp})\tr X_i^{\perp} = I_{(n-p)}$. Define the orthogonal matrices
\begin{equation*}
    \overbar{Q}_i = \big[ I_{p}\otimes X_i \quad I_{p}\otimes X_i^{\perp} \big],    
\end{equation*}
and similarly for~$ \overbar{Q}_{j}$. Then, there exists an orthogonal matrix $\widehat{Q}$, only depending on $n$ and $p$, 
such that $ Q_i = \bar{Q}_i \widehat{Q} $ and $ Q_j = \bar{Q}_j \widehat{Q} $ satisfy
\begin{align}
    \| Q_i \tr D_{ij} Q_i - D   \|_2 &\leq C_D^{(ij)}, &  D &= \diag\left\lbrace I_{p(p-1)/2}, \ 2 \, I_{np-p(p-1)/2} \right\rbrace, \label{eq:diag_Dij_on_Stiefel}  \\
\|Q_j \tr L_{ij} Q_i -  L  \|_2 &\leq C_L^{(ij)}, &    L &= \diag\left\lbrace -I_{p(p-1)/2}, \  -2 I_{(n-p)p}, \  O_{p(p+1)/2} \right\rbrace, \label{eq:diag_Lij_on_Stiefel} 
\end{align}
where $C_D^{(ij)} = 14 \delta_{ij} + 10 \delta_{ij}^2$ and $C_L^{(ij)} =  \tfrac{15}{2} \delta_{ij} + \tfrac{31}{2} \delta_{ij}^2 + 14 \delta_{ij}^3 + 4 \delta_{ij}^4$.
\end{lemma}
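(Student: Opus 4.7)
The plan is to exploit the Kronecker structure $\overbar{Q}_i = I_p \otimes U_i$, where $U_i = [X_i \ X_i^\perp] \in \R^{n \times n}$ is orthogonal, in order to block-diagonalize the leading parts of $D_{ij}$ and $L_{ij}$ from Lemma~\ref{lemma:hessian_at_X_Z_on_Stiefel} via a single orthogonal matrix $\widehat{Q}$ depending only on $n$ and $p$. The bounds $C_D^{(ij)}$ and $C_L^{(ij)}$ will then follow from the remainder matrices $\Delta_{ij}$, $\Lambda_{ij}$, together with the $\overbar{Q}_j$ vs $\overbar{Q}_i$ mismatch on the left in the $L$ identity.

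Denote by $M$ and $N$ the leading parts of $D_{ij}$ and $L_{ij}$, i.e.\ the expressions in~\eqref{eq:Dij_on_Stiefel} and~\eqref{eq:Lij_on_Stiefel} without $\Delta_{ij}$ and $\Lambda_{ij}$. The term $I_p \otimes X_i X_i\tr$ conjugates to $I_p \otimes \diag\{I_p, 0_{n-p}\}$ directly. For $(X_i\tr \otimes X_i)\Pi_{p,n}$ I interpret it as the linear map $\vecop(W) \mapsto \vecop(X_i W\tr X_i)$; writing $W = X_i A + X_i^\perp B$ (the decomposition induced by $\overbar{Q}_i$) shows that this map sends coordinates $(A,B)$ to $(A\tr, 0)$, and hence conjugates to $\diag\{\Pi_{p,p},\,0_{(n-p)p}\}$. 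Combining,
\[
\overbar{Q}_i\tr M \overbar{Q}_i = \diag\!\left\{\tfrac{3}{2} I_{p^2} + \tfrac{1}{2}\Pi_{p,p},\ 2 I_{(n-p)p}\right\}, \quad \overbar{Q}_i\tr N \overbar{Q}_i = \diag\!\left\{-\tfrac{1}{2} I_{p^2} + \tfrac{1}{2}\Pi_{p,p},\ -2 I_{(n-p)p}\right\}.
\]
Since $\Pi_{p,p}$ is the involution with eigenvalue $+1$ on symmetric matrices (dimension $p(p+1)/2$) and $-1$ on skew-symmetric matrices (dimension $p(p-1)/2$), any orthogonal $\widehat{Q}_0 \in \R^{p^2 \times p^2}$ aligned with this splitting (skew block listed first) simultaneously diagonalizes both top blocks. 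Setting $\widehat{Q} = \diag\{\widehat{Q}_0, I_{(n-p)p}\}\, P$, where $P$ is the block permutation swapping the middle and last blocks (sizes $p(p+1)/2$ and $(n-p)p$), then gives $\widehat{Q}\tr \overbar{Q}_i\tr M \overbar{Q}_i \widehat{Q} = D$ and $\widehat{Q}\tr \overbar{Q}_i\tr N \overbar{Q}_i \widehat{Q} = L$ exactly. Crucially, $P$ is only needed to match the $(-I, -2I, O)$ ordering in $L$; it leaves $D$ invariant because its affected blocks both carry eigenvalue $2$, and this is what allows a single $\widehat{Q}$ to work for both.

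For~\eqref{eq:diag_Dij_on_Stiefel}, the orthogonality of $Q_i$ yields $\|Q_i\tr D_{ij} Q_i - D\|_2 = \|\Delta_{ij}\|_2 \leq C_D^{(ij)}$ directly from Lemma~\ref{lemma:hessian_at_X_Z_on_Stiefel}. For~\eqref{eq:diag_Lij_on_Stiefel} I split
\[
Q_j\tr L_{ij} Q_i - L = Q_j\tr \Lambda_{ij} Q_i + \widehat{Q}\tr (\overbar{Q}_j - \overbar{Q}_i)\tr N \overbar{Q}_i \widehat{Q},
\]
so that the first term is controlled by $\|\Lambda_{ij}\|_2$ and the second by $\|U_j - U_i\|_2 \cdot \|N\|_2$. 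Choosing $X_j^\perp$ compatibly with $X_i^\perp$ (e.g.\ via the polar factor of $(I - X_j X_j\tr) X_i^\perp$) gives $\|U_j - U_i\|_2 = \cO(\delta_{ij})$, while $\|N\|_2$ is uniformly bounded by applying the triangle inequality to~\eqref{eq:Lij_on_Stiefel}.

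The main obstacle is the constant bookkeeping in this last step: producing the precise polynomial $C_L^{(ij)} = \tfrac{15}{2}\delta_{ij} + \tfrac{31}{2}\delta_{ij}^2 + 14\delta_{ij}^3 + 4\delta_{ij}^4$ requires tracking the coefficients through $\|\Lambda_{ij}\|_2$, $\|U_j - U_i\|_2$, and $\|N\|_2$ carefully and choosing $X_j^\perp$ so that the first of these errors is small. The structural heart of the proof—the existence of a single orthogonal $\widehat{Q}$ simultaneously diagonalizing both leading symbols—is by contrast elementary, reducing to the observation that the symmetric/skew splitting of $\Pi_{p,p}$ governs both eigenstructures while the outer blocks are multiples of the identity.
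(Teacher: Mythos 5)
Your structural core coincides with the paper's proof (App.~C): the leading parts of $D_{ij}$ and $L_{ij}$ depend only on $X_i$ (they are $D_{ii}$ and $L_{ii}$), and they are simultaneously diagonalized by $Q_i=\bar{Q}_i\widehat{Q}$ with $\widehat{Q}$ built from the skew/symmetric eigenspaces of $\Pi_{p,p}$, the reordering being harmless for $D$ because the two affected blocks both carry the eigenvalue $2$. The paper reaches exactly the same conclusion by writing $D_{ii}$ and $L_{ii}$ as quadratic polynomials in $S_i=(X_i\tr\otimes X_i)\,\Pi_{p,n}$ and diagonalizing $\bar{Q}_i\tr S_i\bar{Q}_i=\diag\{\Pi_{p,p},O\}$; your computation in the $(A,B)$-coordinates induced by $\bar{Q}_i$ is the same argument, and your treatment of \eqref{eq:diag_Dij_on_Stiefel} via $\|Q_i\tr\Delta_{ij}Q_i\|_2=\|\Delta_{ij}\|_2$ is identical to the paper's. (Minor slip: $\bar{Q}_i$ is a fixed column permutation of $I_p\otimes U_i$, not equal to it, but nothing you use is affected.)

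The unfinished part is the closing of \eqref{eq:diag_Lij_on_Stiefel}. You reduce it to $\|\Lambda_{ij}\|_2+\|U_j-U_i\|_2\,\|L_{ii}\|_2$, but you neither prove a quantitative bound on $\|U_j-U_i\|_2$ for your polar-factor choice of $X_j^\perp$ (you only assert $\cO(\delta_{ij})$) nor carry out the arithmetic that produces $C_L^{(ij)}$, which you yourself name as the main obstacle. With $\|L_{ii}\|_2=\|L\|_2= 2$ and $\|\Lambda_{ij}\|_2\le\tfrac{11}{2}\delta_{ij}+10\delta_{ij}^2+4\delta_{ij}^3$, your splitting would give $\tfrac{15}{2}\delta_{ij}+10\delta_{ij}^2+4\delta_{ij}^3\le C_L^{(ij)}$ \emph{provided} $\|U_j-U_i\|_2\le\delta_{ij}$; that inequality is exactly what is missing, and it also has to hold for complements chosen coherently along the whole chain, since each $Q_i$ must serve simultaneously in $Q_i\tr(D_{i,i-1}+D_{i,i+1})Q_i$, $Q_{i+1}\tr L_{i,i+1}Q_i$ and $Q_i\tr L_{i-1,i}Q_{i-1}$. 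To be fair, the paper's own proof is no more careful at this point: it splits as $(Q_j-Q_i)\tr L_{ij}Q_i+Q_i\tr\Lambda_{ij}Q_i$ and simply identifies $\|Q_j-Q_i\|_2$ with $\delta_{ij}$, which presupposes precisely such a compatible choice of $X_j^\perp$; so your explicit flagging of the complement issue is a legitimate observation, but as a proof of the stated bounds your write-up stops one step short.
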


\begin{proof} See App.~\ref{app:proof_diag_Hij}.  \end{proof}

The matrix $\widehat{Q}$ above is related to the diagonalization of the vec-permutation matrix $\Pi_{p,p}$; see~\eqref{eq:def_hat_Pi} in App.~\ref{app:proof_diag_Hij} for its definition. 
It is therefore also independent of $X_i$. This is a crucial property to obtain the following result.

\begin{lemma}\label{lemma:condition_for_pos_def_Gprime}
Define $ \delta = \max_{0 \leq i \leq m-2} \delta_{i,i+1} $ and assume $ \delta \leq 1 $. Then the minimal eigenvalue of $\cG'$ is bounded by
\[ 
   \lambda_{\min}(\cG') \geq 2 - 2 \cos\tfrac{\pi}{m-1} - 43 \delta - 90 \delta^{2}.
\]
As a consequence, $\cG'$ is symmetric and positive definite when
\[
   \delta < \frac{1}{180}\left( \sqrt{2\,569 - 720\cos\tfrac{\pi}{m - 1}} - 43 \right).
\]
\end{lemma}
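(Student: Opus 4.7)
The plan is to bound $\lambda_{\min}(\cG')$ from below by combining the diagonalization lemma with a Weyl-type perturbation argument, then to convert the lower bound into a sufficient condition for positive definiteness by solving a quadratic inequality in $\delta$.

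First I would conjugate $\cG'$ by the orthogonal block-diagonal matrix $\cQ = \diag(Q_1,\ldots,Q_{m-2})$ supplied by Lemma~\ref{lemma:diagonalization_hessian_on_Stiefel}. Since $\cQ$ is orthogonal, $\lambda_{\min}(\cG') = \lambda_{\min}(\cQ\tr \cG' \cQ)$, and from~\eqref{eq:block_QGQ} the conjugated matrix retains its block-tridiagonal structure. I would split it as $\cQ\tr \cG' \cQ = A + E$, where $A$ is the block-tridiagonal matrix whose diagonal blocks are $2D$ and whose off-diagonal blocks are $L$, and $E$ collects the errors controlled by $C_D^{(ij)}$ and $C_L^{(ij)}$ in Lemma~\ref{lemma:diagonalization_hessian_on_Stiefel}. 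The key observation for $A$ is that $D$ and $L$ share the same three-block diagonal partition of size $p(p-1)/2$, $(n-p)p$, $p(p+1)/2$; hence, after a suitable permutation of indices, $A$ decouples into $np$ independent scalar $(m-2)\times(m-2)$ tridiagonal matrices: $\mathrm{tridiag}(-1,2,-1)$ with multiplicity $p(p-1)/2$, $2\cdot\mathrm{tridiag}(-1,2,-1)$ with multiplicity $(n-p)p$, and $4I_{m-2}$ with multiplicity $p(p+1)/2$. The classical formula for the eigenvalues of $\mathrm{tridiag}(-1,2,-1)$ then yields $\lambda_{\min}(A) = 2 - 2\cos\tfrac{\pi}{m-1}$, the smallest of these three blocks.

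Next I would bound $\|E\|_2$ by the maximum block-row sum of operator norms. Each block row of $E$ has at most three nonzero blocks: one diagonal entry of norm at most $C_D^{(i,i-1)} + C_D^{(i,i+1)} \leq 28\delta + 20\delta^2$ and two off-diagonals of norm at most $C_L^{(ij)}$. Using $\delta\leq 1$ to absorb $\delta^3$ and $\delta^4$ into $\delta^2$, one gets $C_L^{(ij)} \leq \tfrac{15}{2}\delta + c\,\delta^2$ with $c$ small enough that the total row sum bound is $43\delta + 90\delta^2$. Weyl's inequality then gives
\[
\lambda_{\min}(\cG') \;\geq\; \lambda_{\min}(A) - \|E\|_2 \;\geq\; 2 - 2\cos\tfrac{\pi}{m-1} - 43\delta - 90\delta^2,
\]
which is the first claim.

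For the second claim, requiring the right-hand side above to be strictly positive leads to the quadratic inequality $90\delta^2 + 43\delta - (2 - 2\cos\tfrac{\pi}{m-1}) < 0$, whose positive root is exactly $\tfrac{1}{180}\bigl(\sqrt{2569 - 720\cos\tfrac{\pi}{m-1}} - 43\bigr)$, and symmetry of $\cG'$ is inherited from the Hessian of $\widetilde F$. The main obstacle I anticipate is the bookkeeping in Step~2: correctly pairing the $Q_i$'s with the appropriate $D_{i,i\pm 1}$ and $L_{i,i+1}$ blocks (noting $L_{ij} = L_{ji}\tr$) and confirming that the same orthogonal factor $\widehat Q$ from Lemma~\ref{lemma:diagonalization_hessian_on_Stiefel} produces the same $D$ and $L$ irrespective of the index, so that $A$ really does decouple into the three scalar tridiagonal systems described above. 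Once that structural point is secured, the eigenvalue computation and the perturbation bound are routine.
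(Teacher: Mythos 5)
Your proposal is correct and follows essentially the same route as the paper: the same splitting $\cQ\tr\cG'\cQ = A+E$ from Lemma~\ref{lemma:diagonalization_hessian_on_Stiefel}, Weyl's inequality, the bound $\|E\|_2 \leq 43\delta + 90\delta^2$, and the same quadratic in $\delta$. The only cosmetic differences are that you compute $\lambda_{\min}(A)=2-2\cos\tfrac{\pi}{m-1}$ by permuting $A$ into decoupled scalar tridiagonal systems rather than via the Kronecker form $2I_{m-2}\otimes D + M\otimes L$ with the known Toeplitz eigenvalues, and you bound $\|E\|_2$ by a block-row-sum argument instead of the block Gershgorin theorem, both of which yield identical constants.
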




\begin{proof}
From Lemma~\ref{lemma:diagonalization_hessian_on_Stiefel}, recall the diagonal matrices $D$ and $L$, and the orthogonal matrices $Q_1, \ldots, Q_{m-2}$. Define $\cQ = \diag\lbrace Q_1, Q_2, \ldots, Q_{m-2} \rbrace $. Substituting the nonzero blocks in~\eqref{eq:block_QGQ} by
\[
 Q_i\tr(D_{i,i-1} + D_{i,i+1}) Q_i  =  2D + E_{ii} , \qquad
 Q_{i+1}\tr L_{i,i+1} Q_i  = L + E_{i,i+1},
 \]
we can write $\cQ\tr \cG' \cQ $ as 
\begin{equation}\label{eq:block_QGQ_diag}
  \cQ\tr \cG' \cQ =  \begin{bmatrix}
        2D  &  L   &   &  & \\
        L &  2D   &  L &  & \\
          & \ddots  &  \ddots & \ddots \\         
     \end{bmatrix} +
     \begin{bmatrix}
        E_{11}  &  E_{12}\tr   &   &  & \\
        E_{12} &  E_{22}   &  E_{23}\tr &  & \\
          & \ddots  &  \ddots & \ddots \\         
     \end{bmatrix} \eqqcolon A + E.
\end{equation}
Eq. \eqref{eq:block_QGQ_diag} is an approximate tridiagonalization of the matrix $\cG'$.
Observe that the symmetric matrices $A$ and $E$ have compatible block partitioning. Furthermore, from Lemma~\ref{lemma:diagonalization_hessian_on_Stiefel}, we get immediately that
\[
 \| E_{ii} \|_2 \leq 28 \delta + 20 \delta^2 \eqqcolon C_D, \qquad
 \| E_{i,i+1} \|_2 \leq \tfrac{15}{2} \delta + \tfrac{31}{2} \delta^2 + 14 \delta^3 + 4 \delta^4 \eqqcolon C_L.
\]

We will regard $\cQ\tr \cG' \cQ$ as an $\cO (\delta)$ perturbation of $A$. Using the properties of Kronecker products, we can write
\begin{equation}\label{eq:cQtrcGcQ}
    A = 2I_{m-2} \otimes D + M \otimes L, \qquad M = \begin{bmatrix}
        0 & 1 &  &  \\
        1 & \ddots & \ddots &  \\
         &  \ddots  & \ddots  & 1 \\
         &  & 1 & 0
    \end{bmatrix} \in \R^{(m-2) \times (m-2)}.
\end{equation}
Thanks to the Kronecker structure in~\eqref{eq:cQtrcGcQ} and the diagonal matrices $D$ and $L$, the eigenvalues of $A$ are easily determined as
\[
 \lambda_{jk} = 2 d_j + \mu_k \ell_j, \quad j=1,\ldots,np, \quad k=1,\ldots, m-2,
\]
where $d_j$ and $\ell_j$ are the diagonal entries of $D$ and $L$, respectively, and $\mu_k$ are the eigenvalues of the Toeplitz matrix $M$. Using~\cite[Eq.~(2.7)]{Gover:1994}, we find
\[
 \mu_k = -2 \cos \tfrac{k \pi}{m-1}, \qquad k=1,\ldots, m-2.
\]
Together with~\eqref{eq:diag_Dij_on_Stiefel} and~\eqref{eq:diag_Lij_on_Stiefel}, this allows us to determine that the minimal value among all $\lambda_{jk}$ corresponds to $ j = 1 $ and $ k = m-2 $. We thus obtain
\[
 \lambda_{\min}(A) = 2 - 2 \cos\tfrac{\pi}{m-1} > 0 \quad \text{for all $m \geq 2$}.
\]
By Weyl's inequality \cite[Corollary~4.9]{Stewart:1990}, $\lambda_{\min}(\cG') = \lambda_{\min}(A+E)> 0$ is guaranteed if $\|E\|_2 < \lambda_{\min}(A)$. To bound $\|E\|_2$, we use a block version of the Gershgorin circle theorem (see \cite[Theorem~2]{Feingold:1962aa} and also~\cite[Remark~1.13.2]{Tretter:2008aa}). Applied to the symmetric block tridiagonal matrix $E$, it guarantees that its eigenvalues are included in the union of intervals 
\[
 \bigcup_{i=1}^{m-2} \bigcup_{k=1}^{np} [\varepsilon^{(i)}_k - R_i, \varepsilon^{(i)}_k + R_i], \qquad R_i = \|E_{i-1,i}\|_2 + \|E_{i,i+1}\tr\|_2 \leq 2 C_L,
\]
where $\varepsilon^{(i)}_k$ is the $k$th eigenvalue of $E_{ii}$. These eigenvalues $\varepsilon^{(i)}_k$ are all bounded in magnitude by $C_D$. Hence $\|E\|_2 \leq C_D + 2 C_L = 43 \delta + 51 \delta^2 + 28 \delta^3 + 8 \delta^4$. Since $ \delta<1 $, it is easily verified that $\|E\|_2 \leq 43 \delta + 90 \delta^2 $ and thus the matrix $\cG'$ remains positive definite if $ 43 \delta + 90 \delta^2 < \lambda_{\min}(A) $, i.e.,
\[
   \delta < \frac{1}{180}\left( \sqrt{2\,569 - 720\cos\tfrac{\pi}{m - 1}} - 43 \right). 
\]
\end{proof}

All put together, we have the final result of local convergence.

\begin{theorem}
If the leapfrog algorithm is started with $ \delta $ satisfying the condition of Lemma~\ref{lemma:condition_for_pos_def_Gprime}, then it converges to the unique minimizing geodesic connecting $X_0$ and $X_{m-1}$, provided that the initial intermediate points are sufficiently close to that geodesic.
\end{theorem}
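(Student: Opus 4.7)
My plan is to combine the preceding lemmas in the natural way: identify a candidate limit $X^{\ast}$, verify the Hessian hypothesis of Lemma~\ref{lemma:condition_for_pos_def_Gprime} at $X^{\ast}$, invoke the Householder--John theorem to make the block Gauss--Seidel iteration matrix contractive, and then transfer the resulting convergence statement back to leapfrog via Proposition~\ref{prop:leapfrog_is_GS}. Concretely, I take $X^{\ast} = (X_1^{\ast}, \ldots, X_{m-2}^{\ast})$ to be the uniformly spaced junction points on the unique minimizing geodesic from $X_0$ to $X_{m-1}$. Each $X_i^{\ast}$ is the midpoint of the geodesic joining $X_{i-1}^{\ast}$ and $X_{i+1}^{\ast}$, hence their Riemannian center of mass, so $X^{\ast}$ is a fixed point of leapfrog and automatically satisfies the first-order condition $\cG(X^{\ast}) = 0$. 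All adjacent distances at $X^{\ast}$ equal $d(X_0, X_{m-1})/(m-1)$, so the hypothesis on $\delta$ together with Lemma~\ref{lemma:condition_for_pos_def_Gprime} gives that $\cG'(X^{\ast})$ is symmetric and positive definite; in particular its block diagonal $D(X^{\ast})$ is nonsingular.

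Next, the Householder--John theorem applied to the positive definite $\cG'(X^{\ast})$ yields $\rho(M^{\mathrm{BGS}}(X^{\ast})) < 1$. With $\cG(X^{\ast}) = 0$, $D(X^{\ast})$ nonsingular, and $\rho(M^{\mathrm{BGS}}(X^{\ast})) < 1$ in hand, Theorem~\ref{thm:asymptotic_speed_nonlinearBGS} furnishes an open ball $\cB(X^{\ast}, r)$ such that every starting tuple in $\cB(X^{\ast}, r)$ produces a well-defined nonlinear Gauss--Seidel sequence for $\widetilde F$ that stays in the ball and converges to $X^{\ast}$. By shrinking $r$ if necessary, continuity of $d$ ensures that any starting tuple in $\cB(X^{\ast}, r) \cap \Stnp^{m-2}$ also meets the pairwise closeness assumption $d(X_{i-1}^{(0)}, X_i^{(0)}) \leq \tfrac{1}{2}\delta_g$ of Proposition~\ref{prop:leapfrog_is_GS}. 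The proposition then identifies the leapfrog iterates with the Gauss--Seidel iterates, and the convergence transfers.

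The main obstacle I foresee is the interplay between the various ``close enough'' hypotheses: Theorem~\ref{thm:asymptotic_speed_nonlinearBGS} only controls distance to the limit in $\Rnp^{m-2}$, whereas the identification of leapfrog with Gauss--Seidel via Proposition~\ref{prop:leapfrog_is_GS} requires a uniform bound on adjacent-point distances that must be preserved at every substep. Lemma~\ref{lem:leapfrog_iterates_stay_close} is precisely what makes this propagation automatic, so the task reduces to choosing $r$ small enough that the initial adjacent distances are below $\tfrac{1}{2}\delta_g$; this is possible because the adjacent distances at $X^{\ast}$ themselves are bounded by $\delta$, and $\delta$ is already constrained by the condition of Lemma~\ref{lemma:condition_for_pos_def_Gprime}. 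Granted this compatibility check, the theorem follows immediately from the four-step chain above.
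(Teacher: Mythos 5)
Your proposal is correct and follows essentially the same route as the paper: Lemma~\ref{lemma:condition_for_pos_def_Gprime} gives positive definiteness of $\cG'$, the Householder--John theorem (with the splitting $A+E$) yields $\rho(M^{\mathrm{BGS}})<1$, and local convergence of the nonlinear block Gauss--Seidel iteration is then transferred to leapfrog via Proposition~\ref{prop:leapfrog_is_GS}. You merely spell out details the paper leaves implicit (the fixed point $X^{\ast}$ with $\cG(X^{\ast})=0$, nonsingularity of $D(X^{\ast})$, the explicit appeal to Theorem~\ref{thm:asymptotic_speed_nonlinearBGS}, and the compatibility of the closeness assumptions via Lemma~\ref{lem:leapfrog_iterates_stay_close}), which is a faithful elaboration rather than a different argument.
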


\begin{proof}
We use \cite[Corollary~3.42]{Hackbusch:2016aa} which 
states that if $ \cG' $ is positive definite and can be split into the sum of an arbitrary positive definite matrix and an arbitrary symmetric matrix, then the scalar Gauss--Seidel converges, i.e., $ \rho(M^{\mathrm{BGS}}) < 1 $, and the convergence is monotone with respect to the energy norm $ \| \cdot \|_{\cG'} $. By~\cite[Theorem~3.53]{Hackbusch:2016aa}, we know that this theorem remains valid for any block version. 

Now, the splitting \eqref{eq:block_QGQ_diag} has exactly the form prescribed by~\cite[Corollary~3.42]{Hackbusch:2016aa}, because $ A $ is positive definite and $ E $ is symmetric.
By Lemma~\ref{lemma:condition_for_pos_def_Gprime}, we know that $ \cG' $ remains positive definite if $ \delta < \frac{1}{180}\left( \sqrt{2\,569 - 720\cos\tfrac{\pi}{m - 1}} - 43 \right) $. Under these conditions, the leapfrog algorithm converges as a block Gauss--Seidel method to the minimizing geodesic connecting $X_{0}$ and $X_{m-1}$.  
\end{proof}

\section{Some observations and open problems}\label{sec:open_problems}
For $m$ large, Lemma~\ref{lemma:condition_for_pos_def_Gprime} gives that $\cG'$ is positive definite when $ \delta \lesssim \pi^2 / 43 m^2 $. Let $ d_{0} = \|X_0 - X_{m-1}\|_{2} $ be the distance between the two endpoints. Then by equidistant partitioning of the intermediate points, one has $\delta \simeq d_{0} / m$. To guarantee a positive definite $\cG'$, we would then need $d_{0} / m  \lesssim \pi^2 / 43 m^2$ which implies $m \lesssim 0.23 / d_{0}$.

This result is unsatisfactory since it would have been desirable to guarantee positive definiteness of $\cQ\tr \cG' \cQ =A+E$ with orthogonal $\cQ$ by increasing the number of points $ m $ given a fixed $ d_{0} $. Unfortunately, we cannot guarantee this with our proof. The problem is that $\|E\|_2 = \cO (\delta)$ whereas $\lambda_{\min}(A) = \cO (1/m^2)$, which leads to our condition that $ m $ needed to be smaller than some fixed fraction of the original distance $d_0$. If $\|E\|_2 = \cO (\delta^2)$, then there would be no condition on $m$ since $\delta^2 \simeq  d_0^2 / m^2 \lesssim 1/m^2$ is sufficient to guarantee $\lambda_{\min}(A+E)>0$. However, it would still not be satisfactory since the perturbation does not lead to an improvement with increasing $ m $, for which one probably needs $\|E\|_2 = \cO (\delta^3)$. As we show below, there is a strong numerical indication that this is not the case with our choice of extended distance function. 

Numerical experiments reported in Figure \ref{fig:error_plot_lambdas_limit_false} suggest that the minimal eigenvalues of $\cG'$ and $A$ differ by $\cO ( \delta^2 )$, whereas our perturbation analysis only showed $ \|E\|_2= \cO ( \delta )$. It is however not trivial to prove this result. 
Indeed, up to first order, we can study the eigenvalues of the symmetric matrix $A+E$ by using the derivative formula~\cite[Theorem~2.3]{Stewart:1990}
\begin{equation}\label{eq:eigenval_perturb_formula}
   \lambda_{\min} (A + E) = \lambda_{\min} (A) + v_{\min}\tr E v_{\min} + \cO ( \| E \|^2 ),
\end{equation}
where $\lambda_{\min} (A) $ is assumed to be isolated (as is indeed the case) and $ v_{\min} $ is its associated eigenvector. One possibility to improve on our bounds, at least asymptotically, would be to prove that $ \vert v_{\min}\tr E v_{\min} \vert= \cO ( \delta^{3} ) $.
However, in the same figure, $\vert v_{\min}\tr E v_{\min} \vert $ seems to be again $\cO (\delta^2)$. In addition, all these conclusions remain true in the limiting geodesic.

\begin{figure}
\centering
\begin{minipage}{.48\textwidth}
  \centering
  \includegraphics[height=.725\linewidth]{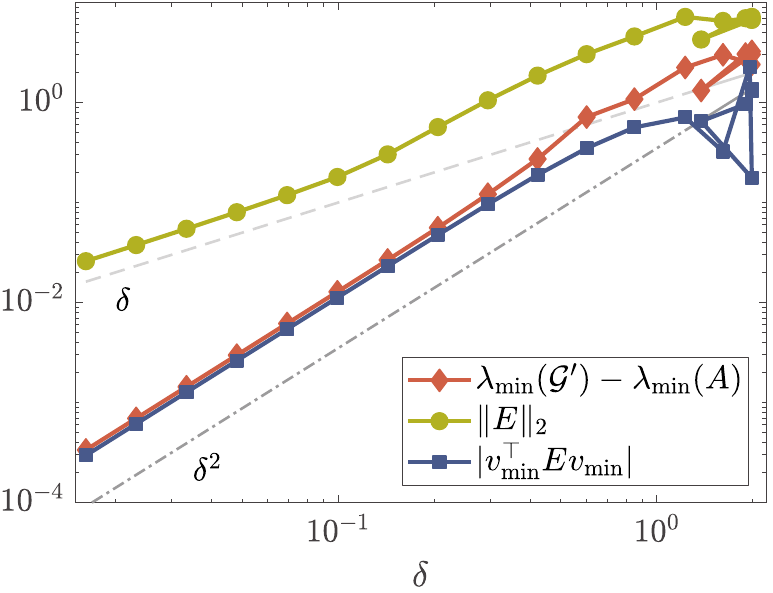}
  \caption{Eigenvalue perturbations -- not at the limiting geodesic.}\label{fig:error_plot_lambdas_limit_false}
\end{minipage}
\hfill
\begin{minipage}{.48\textwidth}
  \centering
  \includegraphics[height=.725\linewidth]{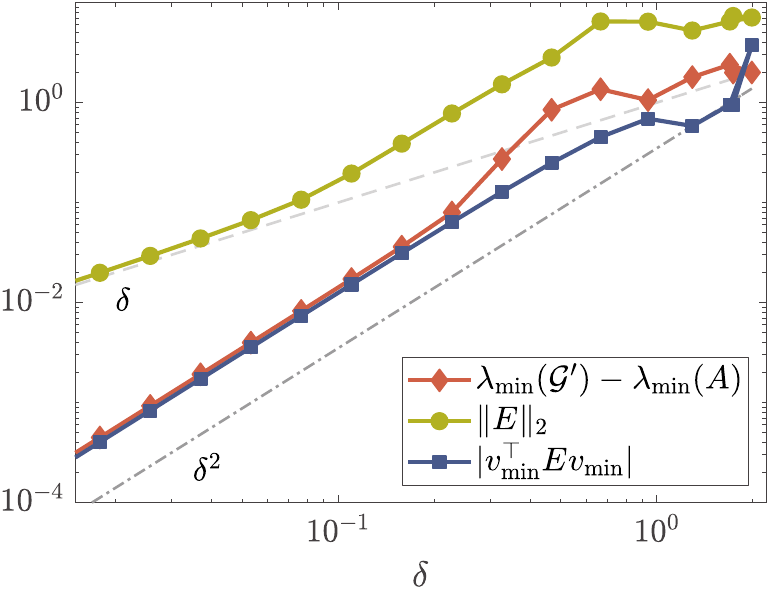}
  \caption{Eigenvalue perturbations -- at the limiting geodesic.}\label{fig:error_plot_lambdas_limit_true}
\end{minipage}
\end{figure}

Another problem with the matrix $ A $ and $ \cG' $ is that it has a bad spectral gap $ \gamma $ (i.e., the difference of smallest and second smallest eigenvalue) when $ m $ grows. Numerical observations suggest that the spectral gap might be $ \cO ( 1/m ) $ which complicates non-asymptotic bounds.

As a last remark, one could resort to a more general theory for the convergence of nonlinear block Gauss--Seidel for a quasi-convex objective function \cite{Grippo:2000}, which requires quasi-convexity for each $ X_i $ alone.
Looking at the Hessian $ \cG' $ where all $X_j$ except $ X_i $ are constant, the only block that is left in the matrix $ \cG' $ is the diagonal one, namely $ D_{i,i-1} + D_{i,i+1} $.
Using Lemma~\ref{lemma:diagonalization_hessian_on_Stiefel}, we immediately get the eigenvalues of this block. Now, for $ C_D^{(ij)} < 1 $ in~\eqref{eq:diag_Dij_on_Stiefel} 
we get strong convexity in $ X_i $ alone.
One problem with this approach is that the feasible set has to be a Cartesian product of convex subsets of $ \Rnp $.
Moreover, the result in \cite{Grippo:2000} only guarantees subsequence convergence, and there is no rate of convergence or contraction rate for the whole sequence. Hence the convergence behavior could also be slower than linear.

\section{Numerical experiments}\label{sec:stiefel_experiments}
The purpose of the numerical experiments in this section is to demonstrate that when $ m \to \infty $, leapfrog still converges, although very slowly. 
We conducted our experiments on a laptop Lenovo ThinkPad T460s with Ubuntu 20.04 LTS and MATLAB R2018a installed, with Intel Core i7-6600 CPU, 20GB RAM, and Mesa Intel HD Graphics 520.

As a concrete example to demonstrate the leapfrog algorithm, let us consider the Stiefel manifold $ \mathrm{St}(12,3) $. We fix one point $ X = [ I_{3} \ \ O_{9 \times 3} ]\tr $, while the other point $ Y $ is placed at the distance $ L^{\ast} = 0.96\,\pi $ from $ X $. This is done by creating a tangent vector to $ \mathrm{St}(12,3) $ at $ X $ of length $ L^{\ast} $, and then mapping it to $ \mathrm{St}(12,3) $ via the Riemannian exponential~\eqref{eq:closed-form-sol-geodesic}.  We want to recover this distance using the leapfrog algorithm and study its convergence.
Even though $ n = 12 $, $ p = 3 $ are small, $ \mathrm{St}(12,3) $ is an interesting case because with $ X = [ I_{3} \ \ O_{9 \times 3} ]\tr $ and $ L^{\ast} = 0.96 \, \pi $ the single shooting method does not converge. The algorithm in \protect{\cite{Zimmermann:2017}} also does not work for this example, due to the fact that the principal matrix logarithm is not defined.
On the contrary, leapfrog with $ m = 4 $ converges and recovers the exact distance.

In the first experiment, for each value of $m \in  \{ 10,20,50,100 \}$, we construct an initial guess $X^{(0)}$ by placing $ m-2 $ intermediate points randomly along the linear segment connecting $ X $ and $ Y $ in the embedding space and projecting them to the Stiefel manifold.
We then apply leapfrog for 300 iterations and monitor the convergence behavior of
\[
   \textrm{err-$k$} = \| X^{(k)} - X^{\ast} \|_{\F},
\]
where $X^{\ast}$ is the solution of leapfrog (i.e., a uniformly distributed tuple corresponding to the global geodesic that was constructed above), and $ X^{(k)} $ is the approximate solution at iteration $ k $ of leapfrog.
This is illustrated in Figure~\ref{fig:Convg_LF_12_3_m10_100_maxiter_300}, from which it is clear that for large $ m $ leapfrog always converges, albeit very slowly.
Although in practice there is limited use of leapfrog on the Stiefel manifold with many points, here it is considered as a model case to show that convergence deteriorates as $ m \to \infty $.

\begin{figure}
\centering
\begin{minipage}{.48\textwidth}
  \centering
  \includegraphics[height=.725\linewidth]{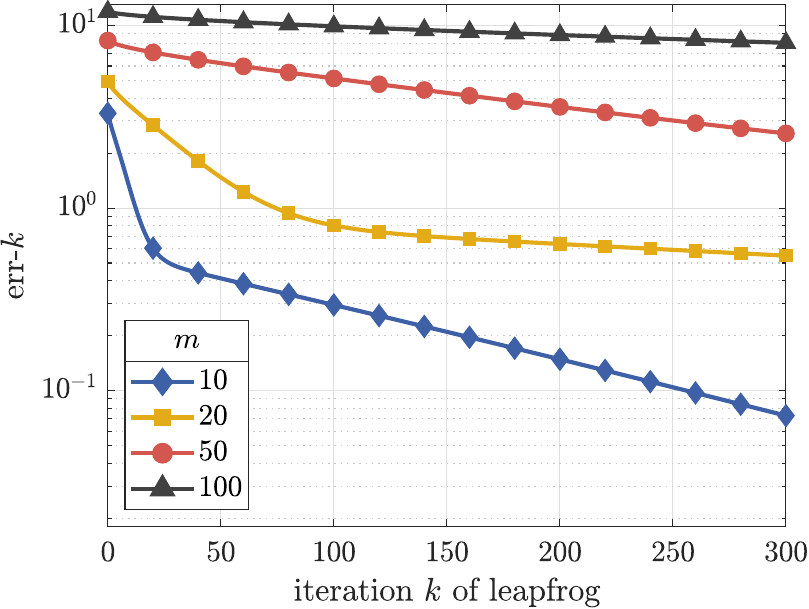}
  \caption{Convergence behavior of err-$k$ for increasing values of $m$.}\label{fig:Convg_LF_12_3_m10_100_maxiter_300}
\end{minipage}
\hfill
\begin{minipage}{.48\textwidth}
  \centering
  \includegraphics[height=.725\linewidth]{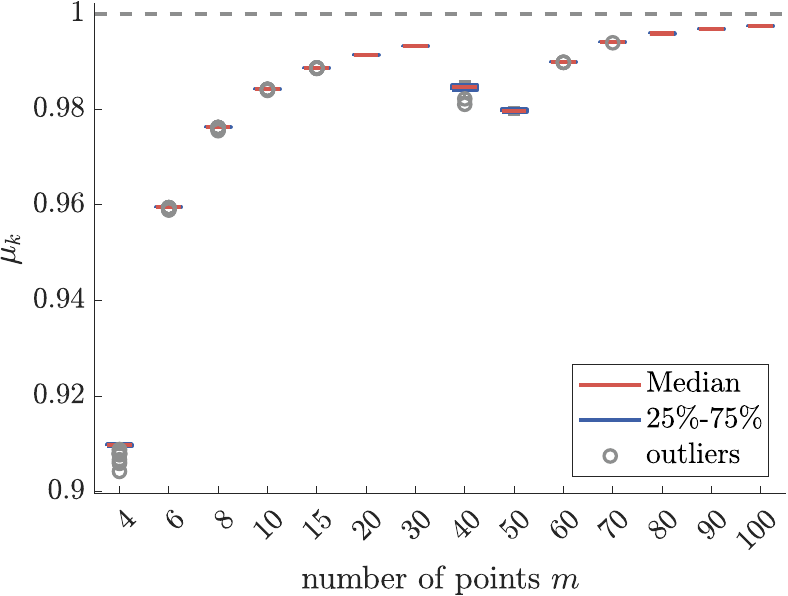}
  \caption{Boxplot of $ \max_{k} \lbrace \mu_{k}^{(i)} \rbrace $ for increasing values of $m$.}\label{fig:boxplot}
\end{minipage}
\end{figure}

In the second experiment, for each $ m \in  \{ 4,6,8,10,\ldots, 100 \} $, we perform 50 leapfrog iterations and repeat this experiment for 100 random initializations of the initial guess $X^{(0)}$. Again, the choice of $m \in \{ 4,6,8,10,\ldots, 100 \} $ is made in order to illustrate the theory, in particular, the deterioration of convergence as $m$ grows.

For each experiment $i$, we define the error reduction rate\footnote{In the limit $ k \to \infty $, this gives the asymptotic Q-rate of convergence of the sequence.} as
\[
   \mu_{k}^{(i)} = \frac{\textrm{err-$(k+1)$}}{\textrm{err-$k$}}, \quad \text{for} \quad k = 0, 1, \ldots, 49, \quad i=1,\ldots,100,
\]
and we compute the worst and the median reduction rates across all the experiments, namely,
   $\max_{i,k} \{ \mu_{k}^{(i)} \}$ and
   $\med_{i} \max_{k} \{ \mu_{k}^{(i)} \}$. 
Since during the first iterations leapfrog is faster, we also compute the convergence factor given by
   $\max_{i} \{ \mu_{0}^{(i)} \}$. 

From Table~\ref{tab:1}, we see that the convergence of leapfrog deteriorates as $ m $ increases but it remains strictly smaller than $1$. For small values of $ m $, $ \max_{i} \lbrace \mu_{0}^{(i)} \rbrace $ and $ \max_{i,k} \lbrace \mu_{k}^{(i)} \rbrace $ are significantly different, whereas for large values of $ m $, they are quite similar. The same conclusion can be reached from Figure~\ref{fig:boxplot} where boxplots show the dispersion and skewness in the $ \mu^{(i)}_k $. Clearly, the convergence factors become very concentrated for large $m$.

\begin{table}
\begin{center}
   \caption{Values of $ \max_{i} \lbrace \mu_{0}^{(i)} \rbrace $, $ \max_{i,k} \lbrace \mu_{k}^{(i)} \rbrace $ and $\med_{i} \max_{k} \{ \mu_{k}^{(i)} \}$ versus number of points $m$, for the experiment described in Section~\ref{sec:stiefel_experiments}.}\label{tab:1}
   \medskip
   \begin{tabular}{cccccccc}
      \hline\noalign{\smallskip}
          $m$ & 4 & 6 & 8 & 10 & 15 & 20 & 30  \\
      \noalign{\smallskip}\hline\noalign{\smallskip}
         $ \max_{i} \lbrace \mu_{0}^{(i)} \rbrace $   & 0.5577  &  0.7058  &  0.7829  &  0.8296  &  0.8604  &  0.8824  &  0.8980  \\
         $ \max_{i,k} \lbrace \mu_{k}^{(i)} \rbrace $ & 0.8776  &  0.9443  &  0.9671  &  0.9781  &  0.9843  &  0.9881  &  0.9906 \\
         $\med_{i} \max_{k} \{ \mu_{k}^{(i)} \}$      & 0.8774  &  0.9443  &  0.9671  &  0.9781  &  0.9843  &  0.9881  &  0.9906  \\
      \noalign{\smallskip}\hline
            \hline\noalign{\smallskip}
          $m$ & 40 & 50 & 60 & 70 & 80 & 90 & 100  \\
      \noalign{\smallskip}\hline\noalign{\smallskip}
         $ \max_{i} \lbrace \mu_{0}^{(i)} \rbrace $   & 0.9390  &  0.9573  &  0.9728  &  0.9799  &  0.9843  &  0.9870  &  0.9888 \\
         $ \max_{i,k} \lbrace \mu_{k}^{(i)} \rbrace $ & 0.9836  &  0.9799  &  0.9898  &  0.9940  &  0.9959  &  0.9969  &  0.9976 \\
         $\med_{i} \max_{k} \{ \mu_{k}^{(i)} \}$      & 0.9822  &  0.9790  &  0.9898  &  0.9940  &  0.9958  &  0.9968  &  0.9975 \\
      \noalign{\smallskip}\hline
   \end{tabular}
   \end{center}
\end{table}

\section{Conclusions and outlook}\label{sec:conclusions}
We considered the leapfrog algorithm of \protect{\cite{Noakes:1998}} for computing geodesics on Riemannian manifolds and established its convergence as a classical nonlinear Gauss--Seidel method within a numerical linear algebra framework. To keep our discussion concrete, we focused on the case of the Stiefel manifold, which is involved in many fields of application and is challenging because it has no explicit formula available for computing the Riemannian distance. However, our discussion may be generalized to other embedded submanifolds.

Phrasing leapfrog as a nonlinear Gauss--Seidel method could be useful for the numerical analysis community. Indeed, as potential future research directions, we could try using existing ideas, such as acceleration and multigrid, to improve the leapfrog algorithm.
Moreover, it might be possible to analyze leapfrog as a nonlinear block Gauss--Seidel method on the manifold domain directly by using geodesic convexity. This would avoid the need to pick a smooth extension of the distance function.

\section*{Data availability}
The leapfrog algorithm and the single shooting method were implemented in MATLAB, see also~\protect{\cite[chapter~2]{Sutti:2020}}. The software package has been named ``LFMS\_Stiefel'', where ``LFMS'' stands for ``leapfrog multiple shooting'', and can be downloaded from~\url{https://github.com/MarcoSutti/LFMS_Stiefel}. 

\section*{Acknowledgements}
This work was completed during the first author's Ph.D. thesis at the University of Geneva, SNSF fund number 163212.

\appendix

\section{Proof of Lemma~\ref{lemma:expansion_ext_dist_function}}\label{app:proof_expansion_ext_dist_function}
The expansion~\eqref{eq:expansion_ext_dist_function} is simple to obtain once the Riemannian distance is related to the Euclidean one.

\begin{proof}[of Lemma~\ref{lemma:expansion_ext_dist_function}]
Take $X, Y \in \Stnp$ sufficiently close so that we can define the Riemannian logarithm $\xi = \Log_X(Y)$ (see Remark~\ref{rmk:distance}). By definition of the Riemannian distance $d$ for the canonical metric $g$, we have
\[
 d^2(X,Y) = \| \xi \|^{2} = g(\xi,\xi).
\]
Writing a tangent vector as $ \xi = X\Omega + X_{\perp} K \in T_{X}\Stnp $ (see Section~\ref{sec:geodesic_exp_log}) and using~\eqref{eq:formula_canonical_metric}, we can evaluate $g$ as
\[
g(\xi,\xi) = \trace( \xi\tr  (I_{n} - \tfrac{1}{2}\,XX\tr ) \, \xi ) =
\tfrac{1}{2}\,\| \Omega \|^{2}_{\F} + \| K \|^{2}_{\F} = \| \xi \|^{2}_{\F} -
\tfrac{1}{2}\,\| \Omega \|^{2}_{\F}.
\]
Using $\Omega = X\tr \xi$, we also have
\begin{equation}\label{eq:app:deriv_gc}
  d^2(X,Y)  =  \| \xi \|^{2}_{\F} - \tfrac{1}{2}\,\| X\tr \xi \|^{2}_{\F}.
\end{equation} 

Since $\xi$ is the initial velocity vector of the geodesic connecting $X$ to $Y$, it follows that
\begin{equation} \label{eq: xi is Y-X}
   \xi = Y - X + \cO (\| \xi \|^{2}_{\F}).
\end{equation}
This can be seen by expanding the matrix exponential in the expression~\eqref{eq:closed-form-sol-geodesic} of the geodesic:
\begin{align*}
    Y &= \begin{bmatrix}X \ \ X_{\perp} \end{bmatrix} \left( I_{n} + 
        \begin{bmatrix}
            X\tr\xi   &   -\xi\tr X_{\perp} \\
            X_{\perp}\tr\xi        &    O_{n-p}
        \end{bmatrix} +
        O\left(\|\xi\|^{2}_{\F}\right)
        \right)
        \begin{bmatrix}
            I_{p} \\
            O_{(n-p)\times p}
        \end{bmatrix} \\
      &=  X + \begin{bmatrix}X \ \ X_ {\perp} \end{bmatrix} 
              \begin{bmatrix}X \ \ X_ {\perp} \end{bmatrix}\tr \xi
              + \cO (\| \xi \|^{2}_{\F}).
\end{align*}
We obtain~\eqref{eq: xi is Y-X} using the fact that $\begin{bmatrix}X \ \ X_{\perp} \end{bmatrix}$ is an orthogonal matrix. In addition, \cite[Lemma~4.2.1, p.~59]{Belkin:2003} shows that
\begin{equation}\label{eq:belkin}
   \| \xi \|^{2}_{\F} = \| X - Y \|^{2}_{\F} + \cO (\| X - Y \|^{4}_{\F}).
\end{equation}
Then inserting the equations~\eqref{eq: xi is Y-X} and~\eqref{eq:belkin} into~\eqref{eq:app:deriv_gc} leads to
\begin{equation}\label{eq:expansion_canonical_dist}
   d^2(X,Y) = \| X - Y \|^{2}_{\F} - \tfrac{1}{2}\| X\tr (X - Y) \|_{\F}^{2} + \cO (\| X - Y \|^{4}_{\F}).
\end{equation}
Using this in \eqref{eq:ext_dist_function}, one obtains~\eqref{eq:expansion_ext_dist_function}. 

\end{proof}

\section{Proof of Lemma~\ref{lemma:hessian_at_X_Z_on_Stiefel}}\label{app:proof_hessian_at_X_Z_on_Stiefel}
The aim is to compute $ L_{ij} = \nabla_{X_{i}} \nabla_{X_{j}} \widetilde{d}^{2}(X_{i},X_{j}) $ and $D_{ij} = \nabla^2_{X_{i}} \widetilde{d}^{2}(X_{i},X_{j})$, where $X_j \in \Stnp$.  Let us simplify notation and define
\begin{equation*}
\begin{split}
   \widetilde{d}^{2}(X,Y) = & \ \| \PSt{X} - \PSt{Y} \|^{2}_{\F} - \tfrac{1}{2} \| I_{p} - \big(\PSt{X}\big)\tr\PSt{Y}\|^{2}_{\F} + \| {X} - \PSt{X} \|_{\F}^{2} \\
   & + \| {Y} - \PSt{Y} \|_{\F}^{2} + \cO (\| \PSt{X} - \PSt{Y} \|^{4}_{\F}).
\end{split}
\end{equation*}
Clearly, $ L_{ij} = \nabla_{X} \nabla_{Y} \widetilde{d}^{2}(X,Y) $ and $D_{ij} = \nabla^{2}_X  \widetilde{d}^{2}(X,Y)$ with $X = X_i$ and $Y = X_j$.
Recall from Section~\ref{sec:ext_dist_function} that we can specify the projector on the Stiefel manifold as $\PSt(Y) = Y(Y\tr Y)^{-1/2}$, that is, the orthogonal factor of the polar decomposition of $Y$.

\begin{proof}[of Lemma~\ref{lemma:hessian_at_X_Z_on_Stiefel}]
To compute the gradient and the Hessian of $ \widetilde{d}^2(\widetilde{X}, \widetilde{Y}) $, consider the perturbation $\widetilde{X} = X + E $, with $ X \in \Stnp $, $\| E \|_{\F}$ small, and expand the previous expression.

First, for a symmetric matrix $A$, one can easily show by diagonalizing that
\begin{equation*}
   ( I + A )^{-1/2} = I - \tfrac{1}{2} A + \tfrac{3}{8} A^2 + \cO ( \| A \|^{3} ),
   \qquad  \|A\| \to 0,
\end{equation*}
from which we can obtain the expansion for the perturbed projector
\begin{small}
\begin{equation}\label{eq:expansion Pst Xtilde}
\begin{split}
   \PSt\widetilde{X} = & \  \widetilde{X}(\widetilde{X}\tr \widetilde{X})^{-1/2} = \\ 
   & \ X + E -\tfrac{1}{2} XX\tr E - \tfrac{1}{2} X E\tr X - \tfrac{1}{2} X E\tr  E - \tfrac{1}{2} E X\tr  E - \tfrac{1}{2} E E\tr  X  \\
   & + \tfrac{3}{8} X (X\tr  E)^2+ \tfrac{3}{8} X (E\tr  X)^2+ \tfrac{3}{8} XX\tr  E E\tr  X + \tfrac{3}{8} X E\tr  XX\tr  E + \cO ( \| E \|^{3}_{\F} ).
\end{split}
\end{equation}
\end{small}

After substituting the expansion~\eqref{eq:expansion Pst Xtilde} for $\PSt(\widetilde{X})$ in $ \widetilde{d}^{2}(X,Y) $ and isolating first- and second-order terms in $E$, we find the expressions for the gradient and the Hessian. Here, only the final results are reported.

The gradient with respect to $X$ is
\begin{small}
\begin{equation}\label{eq:gradient_wrt_X}
   \nabla_{X} \widetilde{d}^2(X, \widetilde{Y}) = -\big( I_{n} - \tfrac{1}{2} XX\tr \big) \PSt\widetilde{Y} + \tfrac{1}{2} X \big(\PSt\widetilde{Y}\big)\tr X - \big( I_{n} - XX\tr \big) \PSt\widetilde{Y}\big(\PSt\widetilde{Y}\big)\tr X,
\end{equation}
\end{small}
and the gradient with respect to $Y$ is
\begin{small}
\begin{equation*}
   \nabla_{Y} \widetilde{d}^2(\widetilde{X}, Y) = -\big( I_{n} - \tfrac{1}{2} YY\tr \big) \PSt\widetilde{X} + \tfrac{1}{2} Y \big(\PSt\widetilde{X}\big)\tr Y - \big( I_{n} - YY\tr \big) \PSt\widetilde{X}\big(\PSt\widetilde{X}\big)\tr Y.
\end{equation*}
\end{small}

The Hessian matrix with respect to $X$ is
\begin{small}
\begin{equation*}
\begin{split}
\nabla^{2}_{X} & \widetilde{d}^2(X,Y) = \\
      & \Sym\!\Big[ Y\tr\! X \otimes I_{n} + \big( Y\tr \otimes X\big) \, \Pi_{p,n} + I_{p} \otimes YX\tr \Big] \\
      & - \tfrac{3}{4}\Sym\!\Big[\big( Y\tr\! XX\tr \otimes X \big) \, \Pi_{p,n} + \big( X\tr \otimes XY\tr\! X \big) \, \Pi_{p,n} + I_{p} \otimes XX\tr YX\tr + Y\tr\! X \otimes XX\tr \Big] \\
      & + 2\Sym\!\Big[ \big( X\tr YY\tr \otimes X \big) \, \Pi_{p,n} + I_{p} \otimes XX\tr YY\tr - \big(X\tr YY\tr XX\tr \otimes X \big) \, \Pi_{p,n} \Big]  \\
      & + (X\tr \otimes X) \, \Pi_{p,n} + I_{p} \otimes XX\tr - I_{p} \otimes YY\tr + X\tr YY\tr X \otimes I_{n} \\
      & - I_{p} \otimes XX\tr YY\tr XX\tr - X\tr YY\tr X \otimes XX\tr,
\end{split}
\end{equation*}
\end{small}
where $\Sym(A) = (A+A\tr)/2$.
In order to simplify $ \nabla^{2}_{X} \widetilde{d}^2(X, Y) $, we will take $ Y = X + \Delta $ with  $\| \Delta \| \to 0$. After some algebraic manipulations, we obtain\footnote{We stress that $\nabla^{2}_{X} \widetilde{d}^2$ denotes the derivative with respect to the first argument of $\widetilde{d}^2$.}
\begin{equation*}
\begin{split}
\nabla^{2}_{X} & \widetilde{d}^2(X, X + \Delta) = \\
      & \ 2 I_{np} + \tfrac{1}{2} (X\tr \otimes X ) \, \Pi_{p,n} - \tfrac{1}{2}( I_{p} \otimes XX\tr ) + 3\Sym\!\left( X\tr \Delta \otimes I_{n} + ( \Delta\tr \otimes X ) \, \Pi_{p,n} \right)  \\
      & + \Sym(I_{p} \otimes \Delta X\tr) - \tfrac{11}{4}\Sym\!\left( (\Delta\tr XX\tr \otimes X) \, \Pi_{p,n}  + \Delta\tr X \otimes XX\tr \right) \\
      & - \tfrac{3}{4}\Sym\!\left( (X\tr \otimes X \Delta\tr X) \, \Pi_{p,n} + I_{p}\otimes XX\tr \Delta X\tr \right) \\
      & + 2\Sym\!\left( (X\tr \Delta\Delta\tr \otimes X) \, \Pi_{p,n} +  I_{p}\otimes XX\tr \Delta\Delta\tr  - (X\tr \Delta\Delta\tr XX\tr \otimes X) \, \Pi_{p,n} \right) \\
      & - I_{p} \otimes \Delta\Delta\tr + X\tr\Delta\Delta\tr X \otimes I_{n} - I_{p} \otimes XX\tr \Delta\Delta\tr XX\tr - X\tr \Delta\Delta\tr X \otimes XX\tr .
\end{split}
\end{equation*}

Observe that every term on the right-hand side above can be bounded by at most a second power of $ \| \Delta \|_{2} $ since $ \| \Sym(A) \|_{2} \leq \| A \|_{2} $, $\|A\otimes B\|_2 = \|A\|_2\|B\|_2$, and $ X \in \Stnp $. Hence, we obtain after some manipulation that
\begin{equation*} 
    \| \nabla^{2}_{X} \widetilde{d}^2(X, X + \Delta) - \nabla^{2}_{X} \widetilde{d}^2(X, X) \|_2 \leq  14 \| \Delta \|_{2} + 10 \| \Delta \|_{2}^{2}. 
\end{equation*}
Writing the result with $ X = X_i $ and $ X + \Delta = X_j $, we recover the expression~\eqref{eq:Dij_on_Stiefel} for the Hessian $ D_{ij} = \nabla^{2}_{X_{i}} \widetilde{d}^2(X_{i},X_{j}) $.

Next, for the term $ L_{ij} $, to obtain the gradient of \eqref{eq:gradient_wrt_X} with respect to $X$ we can expand $ \PSt \widetilde{X} $ at first order in $ E $ 
\begin{equation*}
   \PSt\widetilde{X} = \widetilde{X}(\widetilde{X}\tr \widetilde{X})^{-1/2} = \ X + E -\tfrac{1}{2} X X\tr E - \tfrac{1}{2} X E\tr  X + \cO ( \| E \|^{2}_{\F} ).
\end{equation*}

After some manipulations, we arrive at the mixed term
\begin{equation*}
\begin{split}
   \nabla_{X} & \nabla_{Y} \widetilde{d}^2(X,Y) = \\
    & - I_{np} + \tfrac{1}{2}(I_{p}\otimes YY\tr) - \tfrac{1}{4}(I_{p}\otimes YY\tr XX\tr)  - \tfrac{1}{4}(X\tr \otimes YY\tr X) \, \Pi_{p,n} \\
   & + \tfrac{1}{2}(I_{p}\otimes XX\tr) + \tfrac{1}{2}(X\tr \otimes X) \, \Pi_{p,n} + \tfrac{1}{2}(Y\tr \otimes Y) \, \Pi_{p,n} - \tfrac{1}{4}(Y\tr XX\tr \otimes Y)\,\Pi_{p,n} \\
   & - \tfrac{1}{4}(Y\tr X \otimes YX\tr) + (Y\tr \otimes YY\tr X) \, \Pi_{p,n} - (Y\tr XX\tr \otimes YY\tr X) \, \Pi_{p,n} \\
   & - Y\tr X \otimes YY\tr XX\tr + Y\tr X \otimes YY\tr - (Y\tr \otimes X) \, \Pi_{p,n} + (Y\tr XX\tr \otimes X) \, \Pi_{p,n} \\
   & + Y\tr X \otimes XX\tr - Y\tr X \otimes I_{n}.
\end{split}
\end{equation*}
We observe that the other mixed term is $ \nabla_{Y}\nabla_{X}\widetilde{d}^2(X,Y) = \big(\nabla_{X}\nabla_{Y}\widetilde{d}^2(Y,X)\big)\tr $. 

As above, in order to bound the spectrum of $ \nabla_{X}\nabla_{Y}\widetilde{d}^2(X,Y) $, we expand it with $ Y = X + \Delta $ with  $\| \Delta \| \to 0$.
After some algebraic manipulations, we obtain
\begin{equation*}
\begin{split}
   \nabla_{X} & \nabla_{Y}\widetilde{d}^2(X,X + \Delta) = \\
   & - 2I_{np} + \tfrac{1}{2}(X\tr\!\! \otimes X) \Pi_{p,n} + \tfrac{3}{2}(I_{p}\otimes XX\tr) - \tfrac{1}{4}(X\tr \!\! \otimes X\Delta\!\tr X) \Pi_{p,n} + \tfrac{1}{2}(\Delta\!\tr \!\! \otimes X) \Pi_{p,n} \\
   &  - \tfrac{1}{4}(\Delta\!\tr XX\tr \otimes X) \Pi_{p,n} + \tfrac{3}{4}(\Delta\!\tr X \otimes XX\tr) - \tfrac{5}{4}(I_{p} \otimes X\Delta\!\tr XX\tr) + \tfrac{3}{2}(I_{p} \otimes X\Delta\!\tr)  \\
   &  - \Delta\!\tr X \otimes I_{n} - \tfrac{5}{4}(\Delta\!\tr XX\tr \otimes \Delta)\Pi_{p,n} - \tfrac{5}{4}(I_{p} \otimes \Delta\Delta\!\tr XX\tr) + \tfrac{3}{2}(I_{p} \otimes \Delta\Delta\!\tr)   \\
   &  + \Delta\!\tr X \otimes X\Delta\!\tr + \tfrac{3}{2}(\Delta\!\tr\!\! \otimes \Delta)\Pi_{p,n} - (\Delta\!\tr XX\tr \otimes X\Delta\!\tr X)\Pi_{p,n} - \tfrac{1}{4}(X\tr\!\! \otimes \Delta\Delta\!\tr X)\Pi_{p,n}  \\
   & + (\Delta\!\tr \otimes X\Delta\!\tr X)\Pi_{p,n} - \tfrac{1}{4}(\Delta\!\tr X \otimes \Delta X\tr) - \Delta\!\tr X \otimes X\Delta\!\tr XX\tr + (\Delta\!\tr\!\! \otimes \Delta\Delta\!\tr X)\Pi_{p,n} \\
   & + \Delta\!\tr X \otimes \Delta \Delta\!\tr - \Delta\!\tr X \otimes \Delta\Delta\!\tr XX\tr -(\Delta\!\tr XX\tr \otimes \Delta\Delta\!\tr X)\Pi_{p,n}.
\end{split}
\end{equation*}
Observe that every term on the right-hand side above can be bounded by at most a third power of $ \| \Delta \|_{2} $. Hence, we obtain that
\begin{equation*}
     \| \nabla_{X}\nabla_{Y} \widetilde{d}^2(X, X + \Delta) - \nabla_{X}\nabla_{Y} \widetilde{d}^2(X, X) \|_{2} \leq  \tfrac{11}{2} \| \Delta \|_{2} + 10 \| \Delta \|_{2}^{2} + 4 \| \Delta \|_{2}^{3}. 
\end{equation*}
Writing the result with $ X = X_i $ and $ Y = X_j $, we recover the expression~\eqref{eq:Lij_on_Stiefel} for the gradient $L_{ij} $.
\end{proof}

\section{Proof of Lemma~\ref{lemma:diagonalization_hessian_on_Stiefel}}\label{app:proof_diag_Hij}
We first start with $i=j$, which corresponds to $\Delta_{ij} = \Lambda_{ij} = 0$ in Lemma~\ref{lemma:hessian_at_X_Z_on_Stiefel}, and prove the following auxiliary lemma. 
\begin{lemma}\label{lemma:diagonalization_Dii_Lii}
Define the orthogonal matrix $~\bar{Q}_i = \big[ I_{p}\otimes X_i \quad I_{p}\otimes X_i^{\perp} \big] \in O(np) $, where $ O(np) $ denotes the orthogonal group of $ np \times np $ matrices.
Then there exists an orthogonal matrix $\widehat{Q}$, only depending on $n$ and $p$, 
such that $ Q_i = \bar{Q}_i \widehat{Q} $ satisfies
\begin{equation}\label{eq:diag_Dij_on_Stiefel_App}
    Q_i\tr D_{ii}Q_i = D = \diag\left\lbrace I_{p(p-1)/2},  \  2 \, I_{np-p(p-1)/2} \right\rbrace,
\end{equation}
and
\begin{equation}\label{eq:diag_Lij_on_Stiefel_App}
    Q_i\tr L_{ii}Q_i = L = \diag\left\lbrace -I_{p(p-1)/2}, \  -2 I_{(n-p)p},  \  O_{p(p+1)/2}  \right\rbrace.
\end{equation}
\end{lemma}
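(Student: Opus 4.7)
The plan is to diagonalize $D_{ii}$ and $L_{ii}$ in two stages: first conjugate by the orthogonal matrix $\bar{Q}_i$, which strips out the dependence on $X_i$, and then apply a single orthogonal $\widehat{Q}$ depending only on $n$ and $p$ to finish the diagonalization of both matrices simultaneously.

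The first stage rests on computing the conjugation of the two elementary building blocks appearing in Lemma~\ref{lemma:hessian_at_X_Z_on_Stiefel}. Since $X_i\tr X_i = I_p$, $(X_i^{\perp})\tr X_i = 0$, and $(X_i^{\perp})\tr X_i^{\perp} = I_{n-p}$, a direct computation with Kronecker products gives
\[
  \bar{Q}_i\tr (I_p \otimes X_i X_i\tr)\bar{Q}_i = \begin{bmatrix} I_{p^2} & 0 \\ 0 & 0 \end{bmatrix}.
\]
For the second piece $(X_i\tr \otimes X_i)\Pi_{p,n}$, I would apply the identity $\vecop(AYB) = (B\tr \otimes A)\vecop(Y)$ together with $\Pi_{p,n}\vecop(Z) = \vecop(Z\tr)$: writing $Z = X_i A + X_i^{\perp} B$, the operator sends $\vecop(Z)$ to $\vecop(X_i A\tr)$. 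Expressing the input and output in the $\bar{Q}_i$-basis yields
\[
  \bar{Q}_i\tr (X_i\tr \otimes X_i)\Pi_{p,n}\bar{Q}_i = \begin{bmatrix} \Pi_{p,p} & 0 \\ 0 & 0 \end{bmatrix}.
\]

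Substituting these into the expressions from Lemma~\ref{lemma:hessian_at_X_Z_on_Stiefel} (with $\Delta_{ii}=\Lambda_{ii}=0$), the matrices $\bar{Q}_i\tr D_{ii}\bar{Q}_i$ and $\bar{Q}_i\tr L_{ii}\bar{Q}_i$ are both block diagonal, with bottom-right blocks equal to $2I_{(n-p)p}$ and $-2I_{(n-p)p}$ respectively, and top-left $p^2 \times p^2$ blocks that are affine combinations of $I_{p^2}$ and $\Pi_{p,p}$ (explicitly $\tfrac{3}{2}I+\tfrac{1}{2}\Pi_{p,p}$ and $-\tfrac{1}{2}I+\tfrac{1}{2}\Pi_{p,p}$). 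Since $\Pi_{p,p}$ is a symmetric involution whose $+1$-eigenspace is the space of symmetric $p \times p$ matrices (dimension $p(p+1)/2$) and whose $-1$-eigenspace is that of skew-symmetric matrices (dimension $p(p-1)/2$), reading off eigenvalues gives for $D_{ii}$ the values $1$ on skew and $2$ on both symmetric and normal directions, and for $L_{ii}$ the values $-1$ on skew, $0$ on symmetric, and $-2$ on normal; the multiplicities match $D$ and $L$ exactly.

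For the second stage, I would define $\widehat{Q} = P\,\mathrm{diag}(U, I_{(n-p)p})$, where $U$ is a fixed orthogonal eigenbasis of $\Pi_{p,p}$ (ordered skew-then-symmetric) and $P$ is a block permutation reordering the coordinates as (skew, normal, symmetric). Both $U$ and $P$ are determined entirely by $n$ and $p$, so $\widehat{Q}$ is $X_i$-independent; $Q_i = \bar{Q}_i \widehat{Q}$ is therefore orthogonal, and, because the two conjugated matrices share the same block structure and the same invariant subspaces inside the $p^2$-block, this single $\widehat{Q}$ diagonalizes both into the prescribed forms of $D$ and $L$. The main technical step is the Kronecker/vec-permutation computation for $(X_i\tr \otimes X_i)\Pi_{p,n}$; once this is in hand, everything else is bookkeeping of eigenspaces and of the permutation needed to match the orderings in~\eqref{eq:diag_Dij_on_Stiefel_App}--\eqref{eq:diag_Lij_on_Stiefel_App}.
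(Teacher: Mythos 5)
Your proof is correct and takes essentially the same route as the paper: both hinge on the computation $\bar{Q}_i^\top (X_i^\top \otimes X_i)\,\Pi_{p,n}\,\bar{Q}_i = \diag\left\lbrace \Pi_{p,p}, \ O_{(n-p)p} \right\rbrace$ and on the skew/symmetric eigenspace decomposition of $\Pi_{p,p}$, the paper merely packaging it a bit more compactly by observing that $S_i^2 = I_p \otimes X_i X_i^\top$ for $S_i = (X_i^\top \otimes X_i)\,\Pi_{p,n}$, so that only the single matrix $S_i$ needs diagonalizing. The one slip is pure bookkeeping: since the skew/normal/symmetric coordinate blocks only exist after rotating by $U$, the fixed matrix should be $\widehat{Q} = \diag\left\lbrace U, \ I_{(n-p)p} \right\rbrace P$ rather than $P\,\diag\left\lbrace U, \ I_{(n-p)p} \right\rbrace$, so that in the conjugation the eigenbasis of $\Pi_{p,p}$ acts before the block permutation.
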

\begin{proof}
By properties of the so-called vec-permutation matrices (see~\cite[Eq.~(5), (6), (23)]{Henderson:1981}), there exists a permutation matrix $\Pi_{p,n} \in \R^{np \times np}$ that satisfies
\[
 \Pi_{p,n} ( X_i\tr  \otimes X_i) \, \Pi_{p,n} =  X_i  \otimes X_i \tr, \quad \Pi_{p,n}^{-1}  = \Pi_{p,n}\tr .
\]
This shows that $( X_i\tr  \otimes X_i) \, \Pi_{p,n} = \Pi_{p,n}\tr ( X_i  \otimes X_i\tr )$ is symmetric. Furthermore,
 \begin{equation*}
\begin{split}
    ( ( X_i\tr  \otimes X_i ) \, \Pi_{p,n} )^2 
     = ( X_i\tr  \otimes X_i ) \, \Pi_{p,n}\,\Pi_{p,n}\tr ( X_i  \otimes X_i\tr ) 
     =  I_{p} \otimes X_iX_i\tr .
\end{split}
\end{equation*}
Denoting the symmetric matrix $S_i = ( X_i\tr  \otimes X_i) \, \Pi_{p,n}$, we can then use Lemma~\ref{lemma:hessian_at_X_Z_on_Stiefel} to write
\begin{equation}\label{eq:Dii_as_poly_Si}
 D_{ii} = 2 I_{np} + \tfrac{1}{2} S_i - \tfrac{1}{2} S_i^2, \qquad 
 L_{ii} = -2 I_{np} + \tfrac{1}{2} S_i + \tfrac{3}{2} S_i^2.
\end{equation}
It thus suffices to diagonalize $S_i$. Using the matrix $ \bar{Q}_i $ defined in the statement of the lemma, direct calculation shows that
\begin{equation*}
\begin{split}
    \bar{Q}_i\tr \, S_i \, \overbar{Q}_i
        & = \diag\left\lbrace  ( X_i\tr \otimes I_{p} ) \, \Pi_{p,n} ( I_{p}\otimes X_i ), \  O_{(n-p)p} \right\rbrace
          = \diag\left\lbrace  \Pi_{p,p},  \  O_{(n-p)p} \right\rbrace \eqqcolon \widehat{\Pi},
\end{split}
\end{equation*}
where we used that $\Pi_{p,n} ( I_{p}\otimes X_i ) \Pi_{p,p} =  X_i \otimes I_{p} $, with $ \Pi_{p,p} \in \R^{p^2 \times p^2}$ another vec-permutation matrix that is also symmetric (see~\cite[Eq.~(6), (15)]{Henderson:1981}). The matrix $\widehat{\Pi}$ above therefore has the spectral decomposition 
\begin{equation}\label{eq:def_hat_Pi}
    \widehat{\Pi} = \widehat{Q} \widehat{\Lambda} \widehat{Q}\tr, \qquad \widehat{\Lambda} = \diag\left\lbrace -I_{p(p-1)/2},  \  O_{(n-p)p},  \  I_{p(p+1)/2}  \right\rbrace
\end{equation}
for some orthogonal matrix $\widehat{Q}$ that indeed does not depend on $X_i$, as claimed. By defining the orthogonal matrix $ Q_i = \bar{Q}_i\widehat{Q} $, we have thus shown that $Q_i\tr  S_i Q_i = \widehat{\Lambda}$ and by~\eqref{eq:Dii_as_poly_Si} also that
\begin{equation*}
    Q_i\tr D_{ii} Q_i = 2 I_{np} + \tfrac{1}{2}\widehat{\Lambda} - \tfrac{1}{2}\widehat{\Lambda}^2, \quad
    Q_i\tr L_{ii} Q_i = -2 I_{np} + \tfrac{1}{2}\widehat{\Lambda} + \tfrac{3}{2}\widehat{\Lambda}^2.
\end{equation*}
It is straightforward to verify that these matrices can be written as the claimed matrices $D$ and $L$.

\end{proof}

Lemma~\ref{lemma:diagonalization_hessian_on_Stiefel} is now proven as a perturbation of the case above.

\begin{proof}[of Lemma~\ref{lemma:diagonalization_hessian_on_Stiefel}] From Lemma~\ref{lemma:hessian_at_X_Z_on_Stiefel}, we know that $L_{ij}  = L_{ii} + \Lambda_{ij} $. Lemma~\ref{lemma:diagonalization_Dii_Lii} therefore gives
\begin{equation*}
 Q_j\tr L_{ij} Q_i = (Q_j - Q_i)\tr L_{ij} Q_i  + Q_i\tr L_{ij} Q_i = (Q_j - Q_i)\tr L_{ij} Q_i + L + Q_i\tr \Lambda_{ij} Q_i.
\end{equation*}
Taking norms and recalling that $\delta_{ij} = \|Q_j - Q_i\|_2$, we obtain
\[
 \| Q_j\tr L_{ij} Q_i - L \|_2 \leq \delta_{ij} (\| L_{ii} \|_2  + \| \Lambda_{ij} \|_2) + \| \Lambda_{ij} \|_2.
\]
Since $ \| L_{ii} \|_2 = \|L\|_2 \leq 2$ by Lemma~\ref{lemma:diagonalization_Dii_Lii}, this shows~\eqref{eq:diag_Lij_on_Stiefel}. The bound~\eqref{eq:diag_Dij_on_Stiefel} is similarly proven. 
\end{proof}

\bibliographystyle{plain} 
\bibliography{leapfrog_arXiv_update_2023.bib}

\end{document}